\newtheorem{theorem}{Theorem}[section]
\newtheorem{proposition}{Proposition}[section]
\newtheorem{definition}{Definition}[section]
\newtheorem{corollary}{Corollary}[section]
\newtheorem{example}{Example}[section]
\newtheorem{remark}{Remark}[section]
\begin{document}

\title[A Feynman-Kac formula]{A Feynman-Kac formula for differential forms on manifolds with boundary and  applications}
\author{Levi Lopes de Lima}
\address{Universidade Federal do Cear\'a,
	Departamento de Matem\'atica, Campus do Pici, R. Humberto Monte, s/n, 60455-760,
	Fortaleza/CE, Brazil.}
\email{levi@mat.ufc.br}

\begin{abstract}
We prove a Feynman-Kac formula for differential forms satisfying absolute boun\-dary conditions on Riemannian manifolds with boundary and of bounded geometry.
We use this to construct $L^2$ harmonic forms out of bounded ones on the universal cover of a compact Riemannian manifold whose geometry displays a positivity property expressed in terms of a certain stochastic average of the Weitz\-enb\"ock operator $R_p$ acting on $p$-forms and the second fundamental form of the boundary. This extends previous work by Elworthy-Li-Rosenberg on closed manifolds to this setting. As an
application we find a geometric obstruction to the existence of  metrics with 2-convex boundary and positive $R_2$ in this stochastic sense.  We also discuss a version of the Feynman-Kac  formula for spinors under suitable boundary conditions. 
\end{abstract}

\maketitle

\section{Introduction}\label{intro}

A celebrated result by Gromov \cite{Gr}  says that an open manifold carries metrics with positive and metrics with negative sectional curvature. Thus, in any such manifold there is enough room to interpolate between two rather disparate types of geometries.
In contrast, no such flexibility is available in the context of closed manifolds. For instance, it already follows from Hadamard and Bonnet-Myers theorems from basic Riemannian Geometry that a closed manifold which carries a metric with non-positive sectional curvature does not carry a metric with positive Ricci curvature. 

Our interest here lies in another manifestation of this ``exclusion principle'' for closed manifolds due to Elworthy-Li-Rosenberg \cite{ELR}. 
Relying heavily on stochastic methods, these authors put forward  an elegant refinement of the famous Bochner technique with far-reaching consequences. For example, they prove that a sufficiently negatively pinched closed  manifold does not carry a metric whose Weitzenb\"ock operator acting on $2$-forms is even allowed to be negative in  a region of small volume, an improvement which definitely turns the obstruction unapproachable by the classical reasoning \cite{R}. 
When trying to extend this kind of geometric obstruction to compact manifolds with boundary ($\partial$-manifolds, for short) we should have in mind that balls carry a huge variety of metrics as illustrated by geodesic balls in an arbitrary Riemannian manifold. These examples also show that the boundary can always be chosen convex just by taking the  radius  sufficiently small. Thus, even if we insist on having the boundary appropriately convex in both metrics, some topological assumption on the underlying manifold must be imposed.      
The purpose of this note is to present results in this direction which qualify as natural extensions of those in \cite{ELR}.

If $N$ is a Riemannian $\partial$-manifold of dimension $n$, the   Weitzenb\"ock decomposition reads
\[
\Delta_q=\Delta^B_q+R_q,
\]
where $\Delta_q=dd^\star+d^\star d$ is the Hodge Laplacian acting on $q$-forms, $1\leq q\leq n-1$, $d^\star=\pm\star d\star$ is the co-differential, $\star$ is the Hodge star operator,  $\Delta^B_q$ is the Bochner Laplacian and $R_q$, the Weizenb\"ock curvature operator, depends linearly on the curvature tensor, albeit in a rather complicated way. We remind that $R_1={\rm Ric}$, and since $\star R_p=R_{n-p}\star$, this also determines $R_{n-1}$,  but in general the structure of $R_q$, $2\leq q\leq n-2$, is notoriously hard to grasp.
To these invariants we attach the functions $r_{(q)}:N\to\mathbb R$, $r_{(q)}(x)=\inf_{|\omega|=1}\langle R_q(x)\omega,\omega\rangle$, the least eigenvalue of $R_q(x)$. 
We also consider the  principal curvatures $\rho_1,\cdots ,\rho_{n-1}$ of $\partial N$ computed  with respect to the inward unit normal vector field. For each $x\in\partial N$ and $q=1,\cdots,n-1$, define
\[
\rho_{(q)}(x)=\inf_{1\leq i_1<\cdots<i_q\leq n-1}\rho_{i_1}(x)+\cdots +\rho_{i_q}(x),
\]
the sum of the $q$ smallest principal curvatures at $x$.
We say that $\partial N$ is $q${\rm -convex} if $\underline\rho_{(q)}: =\inf_{x\in\partial M}\rho_{(q)}(x)>0$. Note that $q$-convexity implies $(q+1)$-convexity. Also, $N$ is said to be convex if $\underline{\rho}_{(1)}\geq 0$ everywhere. Finally, recall that a Riemannian metric $h$ on a manifold is $\kappa$-negatively pinched if its sectional curvature satisfies 
$-1\leq K_{\rm sec}(h)< -\kappa<0$.

Stochastic  notions make their entrance in the theory by means of the following considerations. Let $N$ be a Riemannian $\partial$-manifold. In case $N$ is non-compact we always assume that the underlying metric $h$ is complete and the triple $(N,\partial N,h)$ has bounded geometry in the sense of \cite{S1,S2,S3}. We then consider   
reflecting Brownian motion $\{x^t\}$ on $N$ starting at some $x^0\in N$; see Section \ref{appA} for a (necessarily brief) description of this diffusion process. Let $\alpha:N\to\mathbb R$ and $\beta:\partial N\to \mathbb R$ be $C^1$ functions. Adapting a classical definition to our setting, we say that the pair $(\alpha,\beta)$ is  strongly stochastically positive (s.s.p.) if 
\[
{{\limsup}}_{t\to+\infty}\frac{1}{t}\sup_{x^0\in K}\log\mathbb E_{x^0}\left(\exp\left({-\frac{1}{2}\int_0^{t}\alpha(x^s)ds}-\int_0^{t}\beta(x^s)dl^s\right)\right)<0,
\]
for any $K\subset N$ compact, where $l^t$ is the boundary local time associated to $\{x^t\}$. This is certainly the case if both $\alpha$ and $\beta$ have strictly positive lower bounds but the point to emphasize here is that, at least if $N$ is compact, it might well happen with the functions being positive except possibly in regions of small volume, given that the definition involves expectation with respect to the underlying diffusion.

Similarly to \cite{ELR}, our main results  provide examples of $\partial $-manifolds for which there holds an exclusion principle involving the various notions of curvature appearing above. From now on we  always assume that $n\geq 4$ and set $\kappa_p=p^2/(n-p-1)^2$.

\begin{theorem}
	\label{mainesp}
	Let $M$ be  a compact $\partial$-manifold with infinite fundamental group. Assume also that $M$ satisfies $H^p(M;\mathbb R)\neq 0$, where $2\leq p< (n-1)/2$. If $M$ carries a convex $\kappa_p$-negatively pinched metric then it does not carry a metric  with both $(r_{(p\pm 1)},\rho_{(p-1)})$ s.s.p.  
\end{theorem}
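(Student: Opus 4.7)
The plan is to assume for contradiction that a metric $h_2$ with the s.s.p.\ property coexists with the pinched metric $h_1$, produce a nonzero bounded harmonic $p$-form on the universal cover $\tilde M$ from the hypothesis $H^p(M;\mathbb R)\neq 0$, promote it to an $L^2$ form via the Feynman-Kac formula under s.s.p., and then contradict the structure of the covering.

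The bounded harmonic form arises by pullback. Hodge theory on the compact manifold with boundary $(M,h_2)$ under absolute boundary conditions provides, in any nonzero class of $H^p(M;\mathbb R)$, a nonzero smooth harmonic representative $\omega\in\Omega^p(M)$ with $\iota_\nu\omega|_{\partial M}=0$. Pulling back by the Riemannian covering $\pi\colon\tilde M\to M$ yields $\tilde\omega:=\pi^\ast\omega$, which is smooth, nonzero, bounded (since $M$ is compact), $\tilde h_2$-harmonic, satisfies absolute BC along $\partial\tilde M$, and is $\pi_1(M)$-equivariant. Compactness of $M$ further ensures that $(\tilde M,\partial\tilde M,\tilde h_2)$ has bounded geometry in the sense of \cite{S1,S2,S3}, so the paper's Feynman-Kac formula for forms with absolute BC applies to $\tilde\omega$. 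Under the s.s.p.\ assumption on $(r_{(p\pm 1)},\rho_{(p-1)})$, the relevant stochastic expectation decays exponentially in $t$, and paired with the pointwise boundedness of $\tilde\omega$, the Feynman-Kac representation yields $\tilde\omega\in L^2(\tilde M,\tilde h_2)$.

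The $\pi_1(M)$-equivariance and cocompactness of the deck action now close the argument: for any fundamental domain $D\subset\tilde M$,
\[
\|\tilde\omega\|_{L^2(\tilde M,\tilde h_2)}^2
=\sum_{\gamma\in\pi_1(M)}\int_{\gamma D}|\tilde\omega|^2\,dV_{\tilde h_2}
=|\pi_1(M)|\cdot\|\omega\|_{L^2(M,h_2)}^2,
\]
which is $+\infty$ since $\pi_1(M)$ is infinite while $\omega\neq 0$, contradicting $\tilde\omega\in L^2(\tilde M,\tilde h_2)$. The pinched metric $h_1$ enters, in the spirit of \cite{ELR}, to provide a parallel geometric route to the same contradiction: by Cartan-Hadamard for manifolds with convex boundary $\tilde M$ is contractible, and a Donnelly-Xavier type estimate, whose validity is pinned down exactly by the constants $\kappa_p=p^2/(n-p-1)^2$ and the range $p<(n-1)/2$, forces $b_p^{(2)}(M)=0$, again incompatible with the nonzero $L^2$-harmonic $p$-form produced above.

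The principal obstacle is the bounded-to-$L^2$ upgrade. The Feynman-Kac formula for $p$-forms under absolute BC couples the Weitzenböck endomorphisms $R_{p+1}$ and $R_{p-1}$ (arising from the action of $d$ and $d^\star$ on $p$-forms) with a Weingarten contribution $\rho_{(p-1)}$ coming from the sum of the smallest $p-1$ principal curvatures of $\partial\tilde M$, the latter coupled to the boundary local time $l^t$ of reflecting Brownian motion. Extracting genuine $L^2$ integrability from the s.s.p.\ definition requires uniform, initial-point-independent control of the stochastic parallel transport on $p$-forms under reflection off $\partial\tilde M$, together with the correct sign choice so that the $\rho_{(p-1)}$ term enters on the right-hand side of the Weitzenböck identity compatibly with absolute BC. This is precisely the technical content of the Feynman-Kac formula the paper sets out to prove, and it is where the bulk of the work is expected to lie.
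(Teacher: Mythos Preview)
Your bounded-to-$L^2$ upgrade is the gap, and because of it you have misread the role of the pinched metric.

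The Feynman-Kac estimate (\ref{stocest2}) for a $q$-form involves $r_{(q)}$ and $\rho_{(q)}$; it does \emph{not} couple $R_{p+1}$ and $R_{p-1}$ to a $p$-form as your last paragraph suggests. The hypothesis of the theorem is s.s.p.\ for the pairs $(r_{(p\pm 1)},\rho_{(p-1)})$, which control the heat flow on $(p\pm 1)$-forms, so the estimate cannot be applied directly to the lifted harmonic $p$-form $\tilde\omega$. Even if an s.s.p.\ condition did hold in degree $p$, you would not get $\tilde\omega\in L^2$: since $\tilde\omega$ is harmonic one has $P_t\tilde\omega=\tilde\omega$, and (\ref{stocest2}) then reads
\[
|\tilde\omega(x^0)|\le\|\tilde\omega\|_\infty\cdot\mathbb E_{x^0}\!\left(\exp\Bigl(-\tfrac12\int_0^t r_{(p)}\,ds-\int_0^t\rho_{(p)}\,dl^s\Bigr)\right),
\]
whose right-hand side tends to $0$ as $t\to\infty$. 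That forces $\tilde\omega\equiv 0$, not $\tilde\omega\in L^2$; the Feynman-Kac formula yields no spatial decay for a harmonic form.

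The paper's route is different and explains why the hypothesis lives in degrees $p\pm 1$. One writes $\int_{\tilde M}\langle P\phi-\phi,\psi\rangle$, with $\psi=\tilde\omega$ and $\phi$ a compactly supported test form, as an integral of $\langle P_\tau d\phi,d\psi\rangle$ and $\langle P_\tau d^\star\phi,d^\star\psi\rangle$ (Proposition~\ref{keyprop}); now $P_\tau d\phi$ and $P_\tau d^\star\phi$ are $(p+1)$- and $(p-1)$-forms, and it is to \emph{these} that (\ref{stocest2}) is applied. The output (Proposition~\ref{import}) is that the $L^2$-harmonic projection $P$ is nontrivial, hence $H^p_{(2),\rm abs}(\tilde M,\tilde g_+)\neq 0$. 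The $L^2$ harmonic form so produced is \emph{not} $\tilde\omega$ --- as your own equivariance computation correctly shows, a nonzero $\pi_1$-periodic form is never $L^2$ on an infinite cover --- so your step~4 does not apply to it.

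Consequently the pinched metric $h_1$ is not a ``parallel route'' but the other half of the argument: the Donnelly-Xavier estimate (Proposition~\ref{vanisht}) gives $H^p_{(2),\rm abs}(\tilde M,\tilde g_-)=0$, and the contradiction comes from the quasi-isometric invariance of $H^p_{(2),\rm abs}(\tilde M,\cdot)$. Note that if your shortcut worked it would dispense with the pinched metric altogether and prove a strictly stronger statement than the theorem claims.
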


A somewhat more satisfactory  statement is available for $p=1$.

\begin{theorem}
	\label{mainesp2} Let $M$ be a compact manifold with nonamenable fundamental group. If $M$ carries a convex $\kappa_1$-negatively pinched metric then it does not carry a metric with $(r_{(2)},\rho_{(2)})$ s.s.p.
\end{theorem}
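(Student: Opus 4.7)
The plan is to argue by contradiction, adapting the Elworthy-Li-Rosenberg strategy to the boundary setting developed in the present paper. Suppose $M$ carries both a convex, $\kappa_1$-negatively pinched metric $g$ and a second metric $h$ for which $(r_{(2)},\rho_{(2)})$ is s.s.p. Pass to the universal cover $\pi\colon\widetilde M\to M$ and lift both metrics to $\tilde g$ and $\tilde h$; the bounded geometry of $(M,\partial M)$ is inherited by the covers, so the Feynman-Kac formula of the paper is available on $(\widetilde M,\tilde h)$ together with reflecting Brownian motion.

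The first step is to use $\tilde g$ and the nonamenability of $\pi_1(M)$ to produce on $\widetilde M$ a non-trivial bounded closed $1$-form satisfying absolute boundary conditions. Nonamenability supplies Brooks' positive spectral gap $\lambda_0(\widetilde M)>0$, while the $\kappa_1$-pinching combined with convexity of $\widetilde{\partial M}$ allows one to solve a Dirichlet problem at infinity \`a la Anderson-Schoen with Neumann boundary data; taking differentials of the bounded non-constant $\tilde g$-harmonic functions so obtained yields the desired $1$-forms. Invoking the quasi-isometry invariance of reduced $L^2$-cohomology, or equivalently performing Hodge projection with absolute boundary conditions in the $\tilde h$-metric, one promotes this to a non-zero bounded harmonic form $\omega$ on $(\widetilde M,\tilde h)$ in the degree for which the Feynman-Kac integrand is governed by $(r_{(2)},\rho_{(2)})$; the specific value $\kappa_1=1/(n-2)^2$ is tailored precisely so that this degree can be identified, via the $\star$-duality $R_{n-p}=\star R_p\star^{-1}$ together with a Donnelly-Xavier-type spectral estimate adapted to the boundary setting.

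The second step is to feed $\omega$ into the paper's Feynman-Kac formula for forms with absolute boundary conditions on $(\widetilde M,\tilde h)$:
\[
\omega(x)=\mathbb{E}_{x}\!\left[Q_t\,\omega(x^t)\right],\qquad x\in\widetilde M,
\]
where $\{x^t\}$ is reflecting Brownian motion and $Q_t$ is the stochastic multiplicative functional whose operator norm is dominated by $\exp\bigl(-\tfrac12\int_0^t r_{(2)}(x^s)\,ds-\int_0^t\rho_{(2)}(x^s)\,dl^s\bigr)$. Boundedness of $\omega$ combined with the s.s.p.\ hypothesis allows one to take $t\to\infty$ and conclude $\omega(x)=0$ pointwise, contradicting the previous step.

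The main obstacle is the first step: passing from the directly constructed bounded $1$-form to a bounded harmonic form in the correct degree, while preserving absolute boundary conditions and non-triviality in a suitable reduced (or bounded) cohomology. Making this bridge precise hinges on the interplay between the Donnelly-Xavier spectral estimate for $\kappa_1$-pinched Hadamard-type spaces with convex boundary and the structure of reduced $L^2$-cohomology on $(\widetilde M,\tilde h)$; controlling the boundary contribution $\rho_{(2)}$ under Hodge projection is the delicate technical point. Once this is in place, the Feynman-Kac conclusion is routine.
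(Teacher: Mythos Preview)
Your second step misidentifies what $(r_{(2)},\rho_{(2)})$ controls. The Feynman--Kac bound for an absolute harmonic $q$-form involves $(r_{(q)},\rho_{(q)})$; s.s.p.\ of $(r_{(2)},\rho_{(2)})$ says nothing directly about a $1$-form, and you have not produced a bounded harmonic $2$-form (nor is there any reason one should exist). The actual role of $(r_{(2)},\rho_{(2)})$ in the paper is indirect: in Proposition~\ref{keyprop} with $p=1$, one applies the Feynman--Kac estimate to $P_\tau d\phi$ (a $2$-form) and to $P_\tau d^\star\phi$ (a function). The s.s.p.\ hypothesis bounds $\theta_2$ and hence the first term; the second term is controlled not by any curvature assumption but by \emph{transience} of reflecting Brownian motion on $(\widetilde M,\widetilde g_+)$. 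Transience, in turn, follows from the existence of a nonconstant bounded absolute harmonic function, which Lyons--Sullivan guarantees for \emph{any} metric once $\pi_1(M)$ is nonamenable. With both terms bounded, Proposition~\ref{import} upgrades the bounded harmonic $1$-form $df$ to a nontrivial absolute $L^2$ harmonic $1$-form, i.e.\ $H^1_{(2),\rm abs}(\widetilde M,\widetilde g_+)\neq 0$.

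Your first step also inverts the roles of the two metrics. The negatively pinched convex metric $g_-$ is not used to \emph{produce} anything; it is used, via the Donnelly--Xavier--type estimate (Proposition~\ref{vanisht}), to force $H^1_{(2),\rm abs}(\widetilde M,\widetilde g_-)=0$. The bounded harmonic $1$-form lives in the s.s.p.\ metric $g_+$ from the start (as $df$ for $f$ the Lyons--Sullivan function on $(\widetilde M,\widetilde g_+)$), so no transfer between metrics at the level of forms is needed. The contradiction is obtained by comparing the two computations of $H^1_{(2),\rm abs}$ via its quasi-isometry invariance. Your proposed bridge---Hodge-projecting a $\widetilde g$-harmonic $1$-form in the $\widetilde h$-metric while keeping it bounded and in the ``right degree''---is neither needed nor available.
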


\begin{remark}
	\label{ssp}{\rm  These results correspond respectively to Corollary 2.1 and Theorem 2.3 in \cite{ELR}.  We point out that our  assumptions on the fundamental group are natural in the sense that they are  automatically satisfied there. As mentioned above, balls are obvious counterexamples to our results  if the topological assumptions are removed. Also, the manifold $\mathbb S^1\times\mathbb D^{n-1}$ shows that merely assuming that the fundamental group is infinite does not suffice in Theorem \ref{mainesp2}; see Remark \ref{countex} below. On the other hand, it is not clear whether the convexity hypothesis with respect to the negatively curved metric can be relaxed somehow.}  
\end{remark}

Using Theorem \ref{mainesp2} we can exhibit an interesting  class of    compact $\partial$-manif\-olds for which a natural class of metrics is excluded.

\begin{theorem}
\label{hypno} 
If $X$ is a closed hyperbolic manifold of dimension $l\geq 2$ then its product with a disk $\mathbb D^m$ does not carry a metric with  $(r_{(2)},\rho_{(2)})$ s.s.p. 
\end{theorem}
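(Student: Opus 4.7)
The plan is to deduce this from Theorem \ref{mainesp2} applied to $M = X \times \mathbb{D}^m$, by verifying its two hypotheses: that $\pi_1(M)$ is nonamenable and that $M$ admits a convex $\kappa_1$-negatively pinched metric. Since $\mathbb{D}^m$ is contractible, $\pi_1(M) \cong \pi_1(X)$; for a closed hyperbolic manifold of dimension $l \geq 2$ this group is either a closed surface group of genus $\geq 2$ (when $l = 2$) or a nonelementary Gromov hyperbolic group (when $l \geq 3$), and in either case it contains a nonabelian free subgroup and is therefore nonamenable.

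For the metric I would use the standard tubular neighborhood construction in hyperbolic space. Writing $X = \Gamma \backslash \mathbb{H}^l$ with $\Gamma$ torsion-free cocompact, fix a totally geodesic isometric embedding $\mathbb{H}^l \hookrightarrow \mathbb{H}^n$ with $n = l + m$ and extend $\Gamma$ to the subgroup of $\mathrm{Isom}(\mathbb{H}^n)$ acting by $\mathrm{diag}(\gamma, I_m)$ in the hyperboloid model. This extension preserves $\mathbb{H}^l$, acts trivially on the normal directions, and remains free and properly discontinuous on $\mathbb{H}^n$. For any $R > 0$, the closed tubular neighborhood $T_R$ of $\mathbb{H}^l$ is $\Gamma$-invariant, and in Fermi coordinates it carries the warped product metric
\[
dr^2 + \cosh^2(r)\, g_{\mathbb{H}^l} + \sinh^2(r)\, g_{\mathbb{S}^{m-1}}, \qquad 0 \leq r \leq R,
\]
of constant sectional curvature $-1$. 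Since $\Gamma$ acts trivially in the fiber, the quotient $\Gamma \backslash T_R$ is diffeomorphic to $X \times \mathbb{D}^m$ and inherits this metric, which is automatically $\kappa_1$-pinched because $\kappa_1 = 1/(n-2)^2 \leq 1/4 < 1$ for $n \geq 4$.

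To finish I would verify the convexity of the boundary. A direct warped-product computation shows that the shape operator of $\{r = R\}$ with respect to the inward unit normal $-\partial_r$ has eigenvalues $\tanh(R)$ with multiplicity $l$ (in the base directions) and $\coth(R)$ with multiplicity $m - 1$ (in the spherical fiber directions); both are strictly positive for $R > 0$, so $\partial(X \times \mathbb{D}^m)$ is in fact $1$-convex in the sense of the paper. Equivalently, this reflects the well-known convexity of the distance function to a totally geodesic submanifold in $\mathbb{H}^n$, which makes $T_R$ a geodesically convex subset of $\mathbb{H}^n$. With both hypotheses verified, Theorem \ref{mainesp2} yields the conclusion. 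The only real subtlety is keeping track of the sign convention for principal curvatures; under the paper's convention (the one in which geodesic balls count as convex), convex subsets of $\mathbb{H}^n$ have positive principal curvatures on their boundary, so no sign issue arises.
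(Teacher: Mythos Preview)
Your argument is correct and follows essentially the same route as the paper's own proof: realize $X\times\mathbb D^m$ as the $\Gamma$-quotient of a tubular neighborhood of a totally geodesic $\mathbb H^l\subset\mathbb H^{l+m}$, note that this gives a convex hyperbolic (hence $\kappa_1$-pinched) metric, and invoke Theorem~\ref{mainesp2} together with nonamenability of $\pi_1(X)$. The paper is terser---it simply asserts convexity of the tube and nonamenability of $\Gamma$---while you supply the explicit Fermi-coordinate metric, the principal curvatures $\tanh R$ and $\coth R$, and the free-subgroup argument for nonamenability; but the underlying strategy is identical.
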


\begin{proof}
	Write $X=\mathbb H^l/\Gamma$ as the quotient of hyperbolic space $\mathbb H^l$ by a (necessarily nonamenable) group $\Gamma$ of hyperbolic motions. Embed $\mathbb H^l$ as a totally geodesic submanifold of $\mathbb H^{l+m}$ and let $\widetilde M\subset\mathbb H^{l+m}$ be a tubular neighborhood of $\mathbb H^l$ of constant radius. Extend the $\Gamma$-action to $\widetilde M$ in the obvious manner and observe that, since  $\widetilde M$ is convex, $M=\widetilde M/\Gamma=X\times\mathbb D^m$ with the induced hyperbolic metric is convex as well. Thus,  Theorem \ref{mainesp2} applies.
\end{proof}

\begin{remark}
	\label{unobst} {\rm Theorem \ref{hypno} provides a geometric  obstruction to the existence of metrics with $(r_{(2)},\rho_{(2)})$ s.s.p. Notice that if the second Betti number of $X$ vanishes, the obstruction can not be detected by the classical version of the Bochner technique for $\partial$-manifolds \cite[Chapter 8]{Y} even if we assume strict positivity of $(r_{(2)},\rho_{(2)})$.}
\end{remark}

\begin{remark}
	\label{geodcy}{\rm A larger class of manifolds to which the conclusion of Theorem \ref{hypno} obviously holds is formed by tubular neighborhoods of closed embedded totally geodesic submanifolds in a given hyperbolic manifold.}
\end{remark}

\begin{corollary}\label{hypnocor}
	Under the conditions of Theorem \ref{hypno}, assume that $n=l+m$ is even. Then $X\times\mathbb D^m$ does not carry a metric with positive isotropic curvature and $2$-convex boundary. 	
\end{corollary}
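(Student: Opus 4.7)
The plan is to reduce everything to Theorem \ref{hypno} by verifying that a metric on $M=X\times\mathbb{D}^m$ with positive isotropic curvature (PIC) and $2$-convex boundary automatically makes the pair $(r_{(2)},\rho_{(2)})$ strongly stochastically positive. Because $M$ is compact, it will in fact suffice to show that $r_{(2)}$ and $\rho_{(2)}$ are both strictly positive pointwise; compactness then promotes this to uniform lower bounds $r_{(2)}\geq c>0$ on $M$ and $\rho_{(2)}\geq \delta>0$ on $\partial M$, whereupon the exponent inside the expectation defining s.s.p.\ is bounded above by $-\tfrac{c}{2}t$ almost surely and the required decay is immediate.

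I would first dispose of the boundary estimate, which is essentially tautological: $2$-convexity of $\partial M$ is by definition the statement $\rho_{(2)}>0$ on the compact set $\partial M$. The interior estimate is the substantive step, and I would invoke the classical result, going back to Micallef-Wang, that in even dimensions $n=l+m\geq 4$ the PIC condition implies pointwise strict positivity of the Weitzenb\"ock operator $R_2$ on $2$-forms. The proof is purely algebraic: given $\omega\in\Lambda^2 T_x^*M$, one puts $\omega$ in Darboux normal form $\omega=\sum_{i=1}^{n/2}\lambda_i\,e^{2i-1}\wedge e^{2i}$ with respect to an orthonormal frame and then expands $\langle R_2\omega,\omega\rangle$ as a non-negative combination of isotropic curvatures on complex isotropic $2$-planes spanned by linear combinations of the Darboux pairs; the evenness of $n$ is what ensures that every tangent direction participates in such a pair, so no leftover contribution spoils the positivity.

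The main obstacle is precisely this PIC $\Rightarrow R_2>0$ implication, whose proof I would cite from Micallef-Wang rather than recompute. Granted it, pointwise strict positivity together with compactness of $M$ yields the uniform lower bound on $r_{(2)}$, both halves of the s.s.p.\ condition are in force, and Theorem \ref{hypno} closes the argument by ruling out any such metric on $X\times\mathbb{D}^m$.
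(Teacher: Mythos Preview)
Your proposal is correct and follows exactly the paper's approach: invoke Micallef--Wang to get $R_2>0$ from PIC in even dimensions, combine with $2$-convexity and compactness to obtain uniform positive lower bounds on $r_{(2)}$ and $\rho_{(2)}$ (hence s.s.p.), and then apply Theorem~\ref{hypno}. The paper's own proof is a single sentence citing \cite{MW}, so you have in fact supplied more detail---including the Darboux-form sketch and the explicit s.s.p.\ verification---than the original.
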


\begin{proof}
	For even dimensional manifolds it is shown in \cite{MW} that positive isotr\-opic curvature implies $R_2>0$.
\end{proof}

\begin{remark}
	\label{anew}{\rm Since the computation in \cite{MW} expresses $R_2$ as a sum of isotropic curvatures, in Corollary \ref{hypnocor} we can even relax the condition on the metric to allow  the invariants to be negative in a  region of small volume. }
\end{remark}

\begin{remark}
	\label{countex}{\rm 
	The standard product metric on $\mathbb S^1\times\mathbb S^{n-1}$ is known to have positive isotropic curvature. It is easy to check that if $r<\pi/2$ the boundary of the tubular neighborhood $U_r\subset\mathbb S^1\times\mathbb S^{n-1}$ of radius $r$ of the circle factor is $2$-convex. Thus, the conclusion of Corollary  \ref{hypnocor} does not hold for $U_r=\mathbb S^1\times\mathbb D^{n-1}$. Notice that $U_r$ carries a convex hyperbolic metric since its universal cover $\widetilde U_r=\mathbb R\times\mathbb D^{n-1}$ is diffeomorphic to a tubular neigborhood of a geodesic in $\mathbb H^n$. The problem here is that the fundamental group is abelian, hence amenable,  and the argument leading to Theorem \ref{mainesp2} breaks down. This  also can be understood in stochastic terms. In effect, the proof of Theorem \ref{mainesp2} shows that Brownian motion on the universal cover is transient, while recurrence certainly occurs in $\widetilde U_r$; see Remark \ref{functor}. In this respect it would be interesting to investigate if the conclusion of Theorem  \ref{hypno} holds in case $X$ is  flat or, more generally, has non-positive sectional curvature.}
\end{remark}

\begin{remark}
	\label{fraser}{\rm Compact $\partial$-manifolds with positive isotropic curvature have des\-erved a lot
		of attention in recent years. An important result by Fraser \cite{F} says  that such a $\partial$-manifold  is contractible if it is simply connected and  its boundary is connected and $2$-convex. The proof combines index estimates for minimal surfaces and a variant of the Sachs-Uhlenbeck theory adapted to this setting. However, as the examples in Remark \ref{countex} testify, this geometric condition is compatible with an infinite fundamental group. With no assumption on the fundamental group or on the topology of the boundary, these techniques still imply that  
		all the (absolute and relative) homotopy groups vanish in the range $2\leq i\leq n/2$. Moreover, it is shown in \cite{CF} that the fundamental group of the boundary injects into the fundamental group of the manifold. 
		However, if we take $m\geq l+2$ it is easy to check that none of these homotopical obstructions rules out  the  metrics in Corollary \ref{hypnocor}. We point out that a conjecture in \cite{F} asserts that a closed, embedded  $2$-convex hypersurface in a manifold with positive isotropic curvature is either $\mathbb S^n$ or a connected sum of finitely many copies of $\mathbb S^1\times \mathbb S^{n-1}$. Since the fundamental group of a closed hyperbolic manifold is neither infinite cyclic nor a free product, Corollary \ref{hypnocor} provides further support to the conjecture.}
\end{remark}

This paper is mostly inspired on the beautiful work by Elworthy-Rosenbeg-Li \cite{ELR}. Their ideas are used in Section \ref{setup} to construct $L^2$ harmonic forms on the universal cover of certain compact $\partial$-manifolds  starting from bounded ones. This is precisely where stochastic techniques come into play and a crucial ingredient at this point is a Feynman-Kac-type formula for differential forms in higher degree meeting absolute boundary conditions. In order not to interrupt the exposition, this technical result is established in the final Chapter \ref{appA} following ideas in \cite{H1}, where the case of $1$-forms is treated; see also \cite{A,IW} for previous contributions. To illustrate the flexibility of the method we also discuss a similar formula for spinors evolving under the heat semigroup generated by the Dirac Laplacian on a ${\rm spin}^c$ $\partial$-manifold under suitable boundary conditions. Another important ingredient in the argument is a Donnelly-Xavier-type eigenvalue estimate described in Section \ref{dxest}, whose proof   uses both the convexity and the assumption that the  fundamental group is infinite. Combined with Schick's $L^2$ Hodge-de Rham theory \cite{S2,S3} this allows us to prove a vanishing result for the relevant $L^2$ cohomology group. Finally, the proofs  of the main applications (Theorem \ref{mainesp} and \ref{mainesp2} above) are presented in Section \ref{proofsmain}. 

\vspace{0.3cm}
\noindent
{\bf Acknowledgements.} The author would  like to thank Rafael Montezuma (Prin\-ceton University) for conversations.

\section{From bounded to $L^2$ harmonic forms}\label{setup}

We consider a complete Riemannian $\partial$-manifold $N$ with boundary $\partial N$ oriented by an inward  unit normal vector field $\nu$. As always we assume that the triple $(N,\partial N,h)$ has bounded geometry in the sense of \cite{S1,S2,S3}. 
For us the case of interest occurs when $N=\widetilde M$, the universal cover of a compact $\partial$-manifold $(M,g)$ and $h=\widetilde g$, the lifted metric. Recall that a $q$-form $\omega$ on $N$ satisfies  {absolute} boundary conditions if 
\begin{equation}\label{abscond}
\nu\righthalfcup\omega=0,\quad \nu\righthalfcup d\omega=0
\end{equation}
along $\partial N$. Equivalently, 
\begin{equation}
\label{abascondeq}\omega_{\rm nor}=0,\quad(d\omega)_{\rm nor}=0, 
\end{equation}
where $\omega=\omega_{\rm tan}+\nu\wedge\omega_{\rm nor}$ is the  natural decomposition of $\omega$ in its tangential and normal components. Here, we identify $\nu$ to its dual $1$-form in the standard manner.
For simplicity we say that $\omega$ is {absolute} if any of these conditions is satisfied.
Notice that for $q=0$ this means that the given function satisfies Neumann boundary condition. 

For $t> 0$ let $P_t=e^{-\frac{1}{2}t\Delta^{\rm abs}_q}$ be the corresponding heat kernel acting on forms. Thus, for any absolute  $q$-form $\omega_0\in L^2\cap L^\infty$, $\omega_t=P_t\omega_0$  is a solution to the initial-boundary value problem
\begin{equation}
\label{bvp}
\frac{\partial\omega_t}{\partial t}  +  \frac{1}{2}\Delta^{\rm abs}_q\omega_t=0, \quad
\lim_{t\to 0} \omega_t =  \omega_0,\quad
\nu\righthalfcup\omega_t  =  0,\quad
\nu\righthalfcup d\omega_t  =  0.
\end{equation}
A proof of these facts follows from standard functional calculus and the elliptic machinery developed in \cite{S2} \cite{S3}.

A key ingredient in our approach is a Feynman-Kac-type representation of any solution $\omega_t$ as above in terms of  Brownian motion in $N$. This is well-known to hold in the boundaryless case \cite{E, H2, Gu, Ma, St}.  However, as  pointed out in \cite{H1}, where the case $q=1$ is discussed in detail, extra difficulties appear when trying to establish a similar result in the presence of a boundary. 
In Section \ref{appA} we explain how the method in \cite{H1} can be adapted to establish a Feynman-Kac formula for solutions of (\ref{bvp}), regardless of the value of $q$; see Theorem \ref{fkac}. For the moment  we need an immediate consequence of this formula, namely, the useful estimate 
\begin{equation}\label{stocest2}
|\omega_t(x^0)|\leq \mathbb E_{x^0}\left(|\omega_0(x^t)|\exp\left(
-\frac{1}{2}\int_0^t r_{(q)}(x^s)ds-\int_0^t\rho_{(q)}(x^s)dl^s
\right)\right),
\end{equation}
where $\{x^t\}$
is  reflecting Brownian motion on $N$ starting at $x^0$ and  
$l^t$ is the associated boundary local time. 
The remarkable feature of (\ref{stocest2}) is that  the geometric invariants $r_{(q)}$ and $\rho_{(q)}$ play entirely similar roles in stochastically controlling the solution in the long run. 
Now we put this estimate to good use and establish a central result in this work; compare to \cite[Lemma 2.1]{ELR}. 

\begin{proposition}\label{keyprop}
Set $P=\lim_{t\to+\infty}P_t$,
\[
\theta_{q}(x^0)=\int_0^{+\infty}\mathbb E_{x^0}\left(
\exp\left(-\frac{1}{2}\int_0^tr_{(q)}(x^s)ds-\int_0^t\rho_{(q)}(x^s)dl^s\right)
\right)dt,
\]
and take compactly supported $p$-forms $\phi$ and $\psi$ with $\psi_{\rm nor}=0$ along $\partial N$ and  $\phi=0$ in a neighborhood of $\partial N$. If $2\leq p\leq n-2$ there holds 
\begin{eqnarray*}
	\left|\int_N\langle P\phi-\phi,\psi\rangle dN\right| & \leq &
	\frac{1}{2}
	\left(\sup_{x^0\in\,{\rm supp}\,\phi}\theta_{p+1}(x^0)\right)|d\psi|_\infty|d\phi|_1\\
	& & \quad + \frac{1}{2}
	\left(\sup_{x^0\in\,{\rm supp}\,\phi}\theta_{p-1}(x^0)\right)|d^\star\psi|_\infty|d^\star \phi|_1.
\end{eqnarray*}
If $p=1$ we have instead
\begin{eqnarray*}
	\left|\int_N\langle P\phi-\phi,\psi\rangle dN\right| & \leq &
	\frac{1}{2}
	\left(\sup_{x^0\in\,{\rm supp}\,\phi}\theta_{2}(x^0)\right)|d\psi|_\infty|d\phi|_1\\
	& & \quad + \frac{1}{2}
	\sup_{x^0\in\,{\rm supp}\,\phi}\left|\int_0^{+\infty}(P_\tau d^\star\phi)(x^0)\,d\tau\right||d^\star\psi|_\infty.
\end{eqnarray*}	
\end{proposition}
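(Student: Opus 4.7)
The plan is to start from the semigroup identity
\begin{equation*}
P\phi-\phi \,=\, -\frac{1}{2}\int_0^{+\infty}\Delta^{\rm abs}_p P_t\phi\,dt,
\end{equation*}
obtained by integrating $\frac{d}{dt}P_t\phi=-\frac{1}{2}\Delta^{\rm abs}_p P_t\phi$ in $t$ and using $P=\lim_{t\to+\infty}P_t$. Pairing with $\psi$ and splitting $\Delta_p=dd^\star+d^\star d$, Green's formula gives
\begin{equation*}
\int_N\langle P\phi-\phi,\psi\rangle\,dN \,=\, -\frac{1}{2}\int_0^{+\infty}\!\int_N\!\bigl(\langle dP_t\phi,d\psi\rangle+\langle d^\star P_t\phi,d^\star\psi\rangle\bigr)\,dN\,dt,
\end{equation*}
provided the boundary contributions vanish. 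In the $dd^\star$ piece the boundary term involves $\nu\righthalfcup\psi$, which is zero since $\psi_{\rm nor}=0$; in the $d^\star d$ piece it involves $\nu\righthalfcup dP_t\phi$, which vanishes because $P_t\phi$ satisfies the absolute conditions (\ref{abscond}).

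Next I would invoke the intertwinings $dP_t\phi=P_t d\phi$ and $d^\star P_t\phi=P_t d^\star\phi$, where the right-hand sides use the heat semigroups on $(p+1)$- and $(p-1)$-forms respectively. These hold because both sides solve the same absolute heat problem with initial data $d\phi$ or $d^\star\phi$, and the latter are compactly supported away from $\partial N$, so they trivially satisfy the absolute conditions in the relevant degree. This yields
\begin{equation*}
\int_N\langle P\phi-\phi,\psi\rangle\,dN \,=\, -\frac{1}{2}\int_0^{+\infty}\!\int_N\!\bigl(\langle P_t d\phi,d\psi\rangle+\langle P_t d^\star\phi,d^\star\psi\rangle\bigr)\,dN\,dt.
\end{equation*}

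For $p\geq 2$, both $d\phi$ and $d^\star\phi$ have positive degree and the Feynman-Kac bound (\ref{stocest2}) applies. After estimating $|\langle P_t d\phi,d\psi\rangle|\leq |d\psi|_\infty\,|P_t d\phi|$ and the analogue for the $d^\star$ piece, the key trick is to invoke self-adjointness of the associated \emph{scalar} Feynman-Kac semigroup (with bulk potential $\tfrac12 r_{(q)}$ and boundary potential $\rho_{(q)}$ activated by the local time) to swap the roles of starting point $x^0$ and endpoint $x^t$. This converts $\int_N\mathbb E_{x^0}(|d\phi(x^t)|\exp(\cdots))\,dN(x^0)$ into $\int_N |d\phi(y)|\,\mathbb E_y(\exp(\cdots))\,dN(y)$. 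Integrating in $t$ then produces exactly the factor $\theta_{p+1}(y)$, and pulling its supremum over ${\rm supp}\,\phi$ outside $\int |d\phi|\,dN=|d\phi|_1$ gives the first displayed term; the $d^\star$ piece (in degree $p-1$) contributes the second analogously.

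The case $p=1$ is handled identically for the $d\phi$ part, which is a $2$-form. For the $d^\star\phi$ part, however, we are now dealing with a function, and the Feynman-Kac formula reduces to ordinary reflecting Brownian motion without any Weitzenb\"ock weight. Here I would apply Fubini to turn $\int_0^{+\infty}P_\tau d^\star\phi\,d\tau$ into a single function of $x^0$ and pair it against $d^\star\psi$ directly, majorising $|d^\star\psi|$ by $|d^\star\psi|_\infty$. The main obstacle I anticipate lies in the careful justification, in the non-compact bounded-geometry setting, of (i) the commutation relations $dP_t=P_t d$ and $d^\star P_t=P_t d^\star$ under the integral, (ii) the vanishing of every boundary term in the Green identities used above, and (iii) the reversibility-based swap of start and endpoint for the Feynman-Kac diffusion together with Fubini; once these technicalities are secured, the rest follows directly from (\ref{stocest2}) and the very definition of $\theta_q$.
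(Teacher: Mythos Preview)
Your proposal is correct and follows essentially the same route as the paper. The only cosmetic difference is the order of operations: the paper first invokes the intertwinings $d^\star P_\tau\phi=P_\tau d^\star\phi$ and $dP_\tau\phi=P_\tau d\phi$ to rewrite $\Delta^{\rm abs}_p P_\tau\phi=dP_\tau d^\star\phi+d^\star P_\tau d\phi$, and only then applies Green's formula (using $\psi_{\rm nor}=0$ and $(P_\tau d\phi)_{\rm nor}=0$), whereas you integrate by parts first and intertwine afterwards; the boundary terms you identify are the same ones, just seen before rather than after the commutation. Your explicit mention of the symmetry of the scalar Feynman--Kac semigroup to pass from $\int_N\mathbb E_{x^0}(|d\phi(x^t)|\exp(\cdots))\,dN(x^0)$ to $\int_N|d\phi(y)|\,\mathbb E_y(\exp(\cdots))\,dN(y)$ is exactly the step the paper compresses into ``the result now follows by applying (\ref{stocest2})''; spelling it out is a virtue, not a deviation.
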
	

\begin{proof}
We have 
\begin{eqnarray*}
\int_N\langle P\phi-\phi,\psi\rangle dN & = & \lim_{t\to +\infty}\int_N\langle P_t\phi-P_0\phi,\psi\rangle dN\\
& = & \lim_{t\to+\infty}\int_0^t\int_N\langle\partial_\tau P_\tau\phi,\psi\rangle dNd\tau\\
& = & -\frac{1}{2}\lim_{t\to+\infty}\int_0^t\int_N\langle\Delta^{\rm abs}_p P_\tau\phi,\psi\rangle dNd\tau\\
& = & -\frac{1}{2}\lim_{t\to+\infty}\int_0^t\int_N\langle d P_\tau d^\star\phi,\psi\rangle dNd\tau\\
&  & \quad   -\frac{1}{2}\lim_{t\to+\infty}\int_0^t\int_N\langle d^\star P_\tau d\phi,\psi\rangle dNd\tau.
\end{eqnarray*}
We now recall Green's formula: if  $\alpha\wedge\star\beta$ is compactly supported then 
\[
\int_N \langle d\alpha,\beta\rangle dN=\int_N\langle \alpha,d^\star\beta\rangle dN+\int_{\partial N}\langle \alpha_{\rm tan},\star\beta_{\rm nor}\rangle d\partial N.
\]
Since  $(P_\tau d\phi)_{\rm nor}=0$ this leads to   
\begin{eqnarray*}
	\int_N\langle P\phi-\phi,\psi\rangle dN & =  & -\frac{1}{2}\lim_{t\to+\infty}\int_0^t\int_N\langle P_\tau d^\star\phi,d^\star\psi\rangle dNd\tau\\
	&  & \quad - \frac{1}{2}\lim_{t\to+\infty}\int_0^t\int_N\langle   P_\tau d\phi,d\psi\rangle dNd\tau.
\end{eqnarray*}
The result now follows by applying (\ref{stocest2})
to $\omega_\tau=P_\tau d^\star\phi$ and $\omega_\tau=P_\tau d\phi$. 
\end{proof}

From this we derive the existence of absolute $L^2$ harmonic $p$-forms from bounded ones under appropriate positivity assumptions; compare to \cite[Theorem 2.1]{ELR}. In the following we denote by $H^q_{(2),\rm abs}(N,h)$ the  $q^{\rm th}$ $L^2$ absolute cohomology group of $(N,h)$. We refer to \cite{S2,S3} for the definition and basic properties of these invariants, including the corresponding $L^2$ Hodge-de Rham theory.

\begin{proposition}
	\label{import}
	Let $(N,h)$ and $p$ be as above. Assume that both $\sup_{x^0\in K}\theta_{p+1}(x^0)$ and $\sup_{x^0\in K}\theta_{p-1}(x^0)$ are finite if $2\leq p \leq n-2$ and that both  $\sup_{x^0\in K}\theta_{2}(x^0)$ and $\sup_{x^0\in K}|\int_0^{+\infty}(P_\tau d^\star \phi)(x^0)d\tau|$ are finite if $p=1$, where   
	$K\subset N$ is any compact. Then $N$ carries a non-trivial absolute $L^2$ harmonic $p$-form whenever it carries a non-trivial absolute bounded harmonic $p$-form. In particular, $H^p_{(2),\rm abs}(N,h)$ is non-trivial.
\end{proposition}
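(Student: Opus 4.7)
I plan to argue by contradiction. Suppose $H^p_{(2),\mathrm{abs}}(N,h)=0$; by Schick's $L^2$ Hodge--de Rham theory for $\partial$-manifolds of bounded geometry \cite{S2,S3}, this is equivalent to asserting that $P=\lim_{t\to\infty}P_t$, the orthogonal projection onto $L^2$ absolute harmonic $p$-forms, vanishes on $L^2$. Under this assumption, for every compactly supported $p$-form $\phi$ vanishing near $\partial N$ and every compactly supported $\psi$ with $\psi_{\mathrm{nor}}=0$ on $\partial N$, the identity $P\phi=0$ reduces Proposition \ref{keyprop} to a bound of the schematic form
\[
\left|\int_N\langle\phi,\psi\rangle\,dN\right|\leq A(\phi)\,|d\psi|_\infty+B(\phi)\,|d^\star\psi|_\infty,\qquad(\ast)
\]
whose constants $A(\phi),B(\phi)$ are finite precisely because of the standing hypotheses on the suprema of $\theta_{p\pm 1}$ over $\mathrm{supp}\,\phi$ (or, in the $p=1$ branch, of $\left|\int_0^{+\infty}(P_\tau d^\star\phi)(x^0)\,d\tau\right|$).

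The plan is to test $(\ast)$ against slowly varying truncations $\psi_R=\eta_R\phi_0$ of the given bounded harmonic form $\phi_0$. Bounded geometry of $(N,\partial N,h)$ provides, by a standard Friedrichs-type construction adapted to the boundary charts of \cite{S1,S2,S3}, smooth compactly supported $\eta_R\colon N\to[0,1]$ exhausting $N$ and satisfying $|d\eta_R|_\infty\to 0$ as $R\to\infty$. Since $\phi_0$ is absolute and harmonic, $(\phi_0)_{\mathrm{nor}}=0$ and $d\phi_0=d^\star\phi_0=0$, so $(\psi_R)_{\mathrm{nor}}=0$ and
\[
d\psi_R=d\eta_R\wedge\phi_0,\qquad d^\star\psi_R=-\,\nabla\eta_R\righthalfcup\phi_0,
\]
giving $\max\{|d\psi_R|_\infty,\,|d^\star\psi_R|_\infty\}\leq|d\eta_R|_\infty\,|\phi_0|_\infty\to 0$. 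Meanwhile, because $\phi$ has compact support and $\eta_R\to 1$ on compacts, dominated convergence yields $\int_N\langle\phi,\psi_R\rangle\,dN\to\int_N\langle\phi,\phi_0\rangle\,dN$. Passing to the limit in $(\ast)$ forces $\int_N\langle\phi,\phi_0\rangle\,dN=0$ for every admissible $\phi$, which contradicts the non-triviality of $\phi_0$, the latter being smooth and therefore non-vanishing on an open subset of the dense interior $\mathrm{int}(N)$. Consequently $H^p_{(2),\mathrm{abs}}(N,h)\neq 0$, and Hodge--de Rham theory provides the desired non-trivial absolute $L^2$ harmonic $p$-form.

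The main hurdle is technical rather than conceptual: I expect the subtle points to be (i) producing the cutoffs $\eta_R$ with uniform gradient control up to $\partial N$, which relies on the bounded-geometry boundary charts of \cite{S1,S2,S3}, and (ii) identifying $\lim_{t\to\infty}P_t$ on $L^2$ with the projection onto $\ker\Delta^{\mathrm{abs}}_p\cap L^2$, which rests on the self-adjoint realization of the absolute Hodge Laplacian in this setting. Both are by now standard, so the genuine new content here is the bridge forged by the Feynman--Kac estimate (\ref{stocest2}): once Proposition \ref{keyprop} is in hand, the passage from bounded to $L^2$ harmonic reduces to this soft cutoff argument.
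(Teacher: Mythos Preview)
Your argument is correct and is essentially the paper's own proof: the paper too multiplies the bounded absolute harmonic form by a Gaffney-type cutoff sequence, feeds the resulting compactly supported $\psi_n$ into Proposition~\ref{keyprop}, and passes to the limit using $|d\psi_n|_\infty+|d^\star\psi_n|_\infty\to 0$ to conclude $\int_N\langle P\phi-\phi,\psi\rangle\,dN=0$, from which the contradiction with $P\phi=0$ follows. Your only cosmetic difference is the order of operations (you assume $P=0$ before taking the cutoff limit, the paper takes the limit first and then sets $P\phi=0$); your explicit invocation of $d\phi_0=d^\star\phi_0=0$ is exactly what the paper uses implicitly to obtain $|d\psi_n|_\infty+|d^\star\psi_n|_\infty\to 0$, and is indeed satisfied in the applications (lifts of absolute harmonic forms from a compact quotient, or $df$ for $f$ bounded Neumann-harmonic).
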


\begin{proof}
Let $\psi$ be a non-trivial absolute bounded harmonic $p$-form. Consider a Gaff\-ney-type cutoff sequence
$\{h_n\}$ and set $\psi_n=h_n\psi$, so that each $\psi_n$ is compactly supported. Also, $\psi_n\to \psi$ and  $|d\psi_n|_\infty+|d^\star\psi_n|_\infty\to 0$  as $n\to +\infty$. Applying Proposition \ref{keyprop} with $\psi$ replaced by $\psi_n$ and sending $n\to +\infty$ we see that 
\[
\int_N\langle P\phi-\phi,\psi\rangle dN=0.
\]
If no non-trivial absolute $L^2$ harmonic $p$-form   exists then $P\phi=0$ for any $\phi$ and hence $\psi=0$,  a contradiction. The last assertion follows from the $L^2$ Hodge-de Rham theory in \cite{S2,S3}.
\end{proof}

\section{A Donnelly-Xavier-type estimate for $\partial$-manifolds}\label{dxest}

In this section we present a Donnelly-Xavier-type estimate for the universal cover of $\kappa$-negatively pinched 
$\partial$-manifolds which implies the vanishing of certain absolute  $L^2$ cohomology groups. This extends to this setting a sharp result for boundaryless manifolds obtained in \cite{ER1}, which by its turn improves on the original result in \cite{DX}. The exact analogue for $\partial$-manifolds of the estimate in \cite{DX}, hence with a tighter pinching, appears in \cite{S3}; see Remark \ref{schick} below. Our proof adapts a computation in \cite[Section 5]{BB}, where the sharp result for boundaryless manifolds is also achieved, and relies on a rather general integral formula.

\begin{proposition}
	\label{intfor}Let $(N,h)$ be a $\partial$-manifold, $f:N\to\mathbb R$ a $C^2$ function and $\{\mu_i\}_{i=1}^n$ the eigenvalues of the Hessian operator of $f$. If $p\geq 1$ then for any compactly supported $p$-form $\omega$ in $N$ there holds
	\begin{eqnarray*}
		\int_N\left(\langle d\omega,{\nabla f}\wedge\omega\rangle+\langle d^\star\omega,{\nabla f}\righthalfcup\omega\rangle\right) dN &=&\int_N\left(\sum_i\mu_i|{e_i}\righthalfcup\omega|^2+\frac{1}{2}|\omega|^2\Delta_0 f\right)dN\\		& & \quad -\int_{\partial N}\langle {\nabla f}\righthalfcup\omega,\nu\righthalfcup\omega\rangle d\partial N \\
		& & \qquad -\frac{1}{2}\int_{\partial N}|\omega|^2\langle \nabla f,\nu\rangle d\partial N,
	\end{eqnarray*}
	where $\{e_i\}$ is a local orthonormal frame diagonalizing the Hessian of $f$, $\{\mu_i\}$ are the corresponding eigenvalues and $\nu$ is the inward unit normal vector field along $\partial N$.
\end{proposition}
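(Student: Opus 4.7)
The plan is to derive a pointwise Bochner-type identity whose divergence term, when integrated against the inward normal, produces exactly the two boundary integrals on the right-hand side. Fix $x_0 \in N$ and choose a local orthonormal frame $\{e_i\}$ around $x_0$ that (at $x_0$) diagonalizes the Hessian of $f$, so $\nabla_{e_i}\nabla f = \mu_i e_i$, and work in a geodesic normal frame so $\nabla_{e_i}e_j = 0$ at $x_0$. Then all computations below are really pointwise identities at $x_0$, but since $x_0$ is arbitrary they hold globally.

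Using the local formulas $d\omega = \sum_i e^i \wedge \nabla_{e_i}\omega$ and $d^\star\omega = -\sum_i e_i\righthalfcup\nabla_{e_i}\omega$, together with the adjoint relations and the Clifford-type identity $e_i \righthalfcup (\alpha \wedge \omega) + \alpha \wedge (e_i \righthalfcup \omega) = \alpha(e_i)\,\omega$ applied to $\alpha = \nabla f^\flat$, the left-hand side rewrites as
\[
\langle d\omega,\nabla f\wedge \omega\rangle + \langle d^\star\omega,\nabla f\righthalfcup\omega\rangle = \sum_i\langle \nabla_{e_i}\omega,\nabla f\righthalfcup(e^i\wedge\omega) - \nabla f\wedge(e_i\righthalfcup\omega)\rangle.
\]
The next move is to recognize each summand as almost a pure divergence. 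Differentiating the scalars $\langle \omega,\nabla f\wedge(e_i\righthalfcup\omega)\rangle$ and $\langle \omega,\nabla f\righthalfcup(e^i\wedge\omega)\rangle$ in the direction $e_i$ and applying the Leibniz rule, the hypothesis $\nabla_{e_i}\nabla f = \mu_i e_i$ produces zero-th order terms $\mu_i|e_i\righthalfcup\omega|^2$ and $\mu_i(|\omega|^2-|e_i\righthalfcup\omega|^2)$ via the algebraic identities $\langle \omega,e_i\wedge(e_i\righthalfcup\omega)\rangle = |e_i\righthalfcup\omega|^2$ and $\langle \omega,e_i\righthalfcup(e^i\wedge\omega)\rangle = |\omega|^2-|e_i\righthalfcup\omega|^2$, while the remaining pieces are $\pm\langle d\omega,\nabla f\wedge\omega\rangle$ and $\pm\langle d^\star\omega,\nabla f\righthalfcup\omega\rangle$ coming from $\sum_i e^i\wedge\nabla_{e_i}\omega$ and $\sum_i e_i\righthalfcup\nabla_{e_i}\omega$. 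Using $\Delta_0 f = -\sum_i\mu_i$ and collecting terms on the same side yields the pointwise identity
\[
\langle d\omega,\nabla f\wedge\omega\rangle + \langle d^\star\omega,\nabla f\righthalfcup\omega\rangle = \sum_i \mu_i|e_i\righthalfcup\omega|^2 + \tfrac{1}{2}|\omega|^2\Delta_0 f + \tfrac{1}{2}\operatorname{div}(V),
\]
where $V$ is the vector field characterized by $\langle V,X\rangle = \langle \omega,\nabla f\righthalfcup(X^\flat\wedge\omega) - \nabla f\wedge(X\righthalfcup\omega)\rangle$.

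The final step is to integrate this over $N$ and invoke the divergence theorem. Since $\nu$ is the \emph{inward} normal, $\int_N \operatorname{div}(V)\,dN = -\int_{\partial N}\langle V,\nu\rangle\,d\partial N$. Evaluating $\langle V,\nu\rangle$ requires only one more application of the Clifford identity, now with $\alpha = \nu^\flat$, together with the adjoint relation $\langle \nu\wedge\omega,\nabla f\wedge\omega\rangle = \langle \nabla f,\nu\rangle|\omega|^2 - \langle \nabla f\righthalfcup\omega,\nu\righthalfcup\omega\rangle$. This collapses $\langle V,\nu\rangle$ into a pure linear combination of $\langle \nabla f\righthalfcup\omega,\nu\righthalfcup\omega\rangle$ and $|\omega|^2\langle\nabla f,\nu\rangle$, yielding the two boundary integrals stated in the proposition. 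The main obstacle is the sign/bookkeeping combinatorics arising from iterated use of the adjoint and Clifford relations, both in extracting the divergence form of the bulk identity and in collapsing the boundary pairing of $V$ into the two explicit boundary terms; being systematic with the frame choice $\nabla_{e_i}e_j|_{x_0}=0$ is what keeps this tractable.
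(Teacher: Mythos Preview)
Your approach is essentially the same as the paper's. The paper defines the vector field $V_0$ by $\langle V_0,W\rangle=\langle\nabla f\righthalfcup\omega,W\righthalfcup\omega\rangle$, quotes the divergence identity ${\rm div}\,V_0=\sum_i\mu_i|e_i\righthalfcup\omega|^2-\langle d\omega,\nabla f\wedge\omega\rangle-\langle d^\star\omega,\nabla f\righthalfcup\omega\rangle+\langle\nabla_{\nabla f}\omega,\omega\rangle$ from \cite{BB}, integrates, and then handles the residual $\langle\nabla_{\nabla f}\omega,\omega\rangle$ term by a second integration by parts involving $|\omega|^2\nabla f$; your single vector field is precisely $V=|\omega|^2\nabla f-2V_0$, so you have merely merged the paper's two divergence computations into one and carried out the \cite{BB} calculation explicitly rather than citing it.
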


\begin{proof}
Consider the vector field $V$ defined by $\langle V,W\rangle=	\langle {\nabla f}\righthalfcup\omega,W\righthalfcup\omega\rangle $, for any $W$. 
A computation in \cite[Section 5]{BB} gives 
\[
{\rm div}\,V  = \sum_i\mu_i|e_i\righthalfcup \omega|^2   -\langle d\omega,\nabla f\wedge \omega\rangle -\langle{d^\star\omega,\nabla f}\righthalfcup\omega\rangle+\langle\nabla_{\nabla f}\omega,\omega\rangle.
\]	              
Integrating by parts we obtain
\begin{eqnarray*}
	\int_N\left(\langle d\omega,{\nabla f}\wedge\omega\rangle+\langle d^\star\omega,{\nabla f}\righthalfcup\omega\rangle\right) dN &=&\int_N\left(\sum_i\mu_i|{e_i}\righthalfcup\omega|^2+\langle\nabla_{\nabla f}\omega,\omega \rangle\right)dN\\		
	& & \quad -\int_{\partial N}\langle {\nabla f}\righthalfcup\omega,\nu\righthalfcup\omega\rangle d\partial N. 
\end{eqnarray*}
Now observe that
\begin{eqnarray*}
	\int_N\left(\langle\nabla_{\nabla f}\omega,\omega\rangle-\frac{1}{2}|\omega|^2\Delta_0  f\right)dN 
	& = & \frac{1}{2}\int_N\left(\langle\nabla f,\nabla |\omega|^2\rangle-|\omega|^2\Delta_0  f\right)dN \\
	& = & \frac{1}{2}\int_N{\rm div}(|\omega|^2\nabla f)dN\\
	& = & -\frac{1}{2}\int_{\partial N}|\omega|^2\langle\nabla f,\nu\rangle d\partial N.
\end{eqnarray*}
This completes the proof.
\end{proof}

We can now present a version of the Donnelly-Xavier-type estimate that suffices for our purposes.

\begin{proposition}
	\label{dxesttheo}
	Let $(M,g)$ be a compact and convex $\partial$-manifold with infinite fundamental group and assume that  $g$ satisfies $-1\leq K_{\rm sec}(g)\leq-\kappa<0$. If $p\geq 1$ then for any compactly supported $p$-form $\omega$ in $\widetilde M$ satisfying $\nu\righthalfcup \omega=0$ along $\partial\widetilde M $ there holds
	\begin{equation}\label{dx1}
	|d\omega|_2+|d^{\star}\omega|_2\geq\frac{1}{2}((n-p-1)\sqrt{\kappa}-p)|\omega|_2.
	\end{equation}
\end{proposition}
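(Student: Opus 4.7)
The strategy is to apply Proposition~\ref{intfor} with a carefully chosen auxiliary function $f:\widetilde M\to\mathbb R$ and then extract the bound by Cauchy-Schwarz on the left-hand side. The hypothesis $|\pi_1(M)|=\infty$ enters exactly to make $\widetilde M$ non-compact, so one may take $f(x)=-d(x,x_0)$ with $x_0\in\widetilde M$ arbitrarily far from $\mathrm{supp}(\omega)$ or, passing to a subsequential limit as $x_0$ escapes to infinity, a Busemann function. Under the pinching $-1\le K_{\rm sec}\le-\kappa$, one has $|\nabla f|=1$ and the Hessian/Rauch comparison gives $\mu_1(f)=0$ in the gradient direction together with $\mu_i(f)\in[-\coth r,-\sqrt\kappa\coth(\sqrt\kappa r)]$ for $i\ge 2$; as $r=d(\cdot,x_0)\to\infty$ on $\mathrm{supp}(\omega)$, these tighten to the asymptotic window $[-1,-\sqrt\kappa]$.

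For the bulk, the Hodge convention $\Delta_0 f=-\mathrm{tr}\,\mathrm{Hess}\,f$ used in the paper lets one rewrite, in a local frame $\{e_i\}$ diagonalizing $\mathrm{Hess}\,f$ and with $\omega=\sum_I\omega_Ie^I$,
\[
\sum_i\mu_i(f)|e_i\righthalfcup\omega|^2+\tfrac12|\omega|^2\Delta_0 f=\tfrac12\sum_I\Bigl(\sum_{k\notin I}(-\mu_k(f))-\sum_{k\in I}(-\mu_k(f))\Bigr)|\omega_I|^2.
\]
With $-\mu_1(f)=0$ and $-\mu_i(f)\in[\sqrt\kappa,1]$ for $i\ge 2$, the combinatorial minimum of the bracketed expression over multi-indices of size $p$ is attained when the gradient index $1$ is excluded from $I$ (so the zero goes into the ``not-in-$I$'' sum, making it as small as permitted) and equals exactly $(n-p-1)\sqrt\kappa-p$. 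Hence the bulk integral is bounded below by $\tfrac12((n-p-1)\sqrt\kappa-p)|\omega|_2^2$, up to asymptotically negligible corrections; simultaneously, Cauchy-Schwarz together with the identity $|\nabla f\wedge\omega|^2+|\nabla f\righthalfcup\omega|^2=|\omega|^2$ gives the upper bound $|\mathrm{LHS}|\le(|d\omega|_2+|d^\star\omega|_2)|\omega|_2$.

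What remains is to dispose of the two boundary integrals in Proposition~\ref{intfor}. The first, $\int_{\partial\widetilde M}\langle\nabla f\righthalfcup\omega,\nu\righthalfcup\omega\rangle$, vanishes on the nose by the assumption $\nu\righthalfcup\omega=0$. The second, $-\tfrac12\int_{\partial\widetilde M}|\omega|^2\langle\nabla f,\nu\rangle$, is where convexity of $\partial\widetilde M$ must do its work: convexity forces interior geodesics to exit the boundary transversally with $\langle\nabla r,\nu\rangle\le 0$, and one has to arrange the Busemann direction so that the resulting sign of the boundary integrand is non-negative and the term can be dropped. Assembling the bulk lower bound, the favourable-sign boundary contributions, and the Cauchy-Schwarz upper bound on the LHS yields $(|d\omega|_2+|d^\star\omega|_2)|\omega|_2\ge\tfrac12((n-p-1)\sqrt\kappa-p)|\omega|_2^2$, which is the claim. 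The step I expect to be the main obstacle is exactly this sign compatibility for the second boundary integral: $\nu\righthalfcup\omega=0$ handles only the first boundary term, and one must combine convexity with a judicious choice of Busemann-type $f$ whose gradient exits $\partial\widetilde M$ outward so that this term is non-negative (or at least vanishes in the limit).
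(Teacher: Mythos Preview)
Your proposal follows essentially the same route as the paper: apply Proposition~\ref{intfor} with a distance function, kill the first boundary integral via $\nu\righthalfcup\omega=0$, control the second via convexity, estimate the bulk by Hessian comparison, and send the base point to infinity using $|\pi_1(M)|=\infty$. The paper does exactly this, taking $f=d_x$ for interior points $x$ and passing to the limit along a sequence $x_i\to\infty$ rather than invoking Busemann functions.

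One point deserves correction. You take $f=-d(\cdot,x_0)$, and then worry about having to ``arrange the Busemann direction'' so that the second boundary integral $-\tfrac12\int_{\partial\widetilde M}|\omega|^2\langle\nabla f,\nu\rangle$ is nonnegative. But convexity already pins down the sign of $\langle\nabla d_x,\nu\rangle$ for \emph{every} interior base point $x$: the minimizing geodesic from $x$ to a boundary point exits $\partial\widetilde M$ outward, so $\langle\nabla d_x,\nu\rangle\le 0$ everywhere on $\partial\widetilde M$. There is no freedom to ``arrange'' here---with your sign choice $f=-d$ the boundary term comes out $\le 0$, which is the wrong direction. The paper therefore works with $f=+d_x$, for which the boundary term is $\ge 0$ and can be discarded immediately; the bulk combinatorics then has to be read with the opposite sign of $\mu_i$ from what you wrote (this is why the paper introduces $\eta_i=-\mu_i$). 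Once you make that sign adjustment, what you flagged as ``the main obstacle'' disappears in one line and your argument coincides with the paper's.
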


\begin{proof}
Convexity implies that any $x\in M\setminus \partial M$ and $y\in M$ can be joined by a minimizing geodesic segment lying in the interior of $M$ (except possibly for $y$). The same holds in $\widetilde M$ with the segment now being unique. Thus, for any $x\in \widetilde M\setminus\partial \widetilde M$ the Riemannian distance $d_x$ to $x$ is well-defined. Notice that  $\langle \nabla d_x,\nu\rangle\leq 0$ along $\partial\widetilde N$, $|\nabla d_x|=1$ and $\Delta_0 d_x=-\sum_i\mu_i$, where we may assume that $\mu_1=0$ .  Thus, using the  boundary condition  $\nu\righthalfcup\omega=0$ and  Proposition \ref{intfor} with $f=d_x$ we obtain 
\[
|\omega|_2(|d\omega|_2+|d^{\star}\omega|_2)\geq \int_N\left(\sum_i\mu_i|{e_i}\righthalfcup\omega|^2-\frac{1}{2}|\omega|^2\sum_i\mu_i\right)d\widetilde M.
\]
Expand $\omega=\sum_I\omega_Ie_I$, where $I=\{i_1<\cdots <i_p\}$ and $e_I=e_{i_1}\wedge\cdots\wedge e_{i_p}$. Since $\sum_i\mu_i|{e_i}\righthalfcup e_I|^2=\sum_{i\in I}\mu_i$ the right-hand side equals
\[
\frac{1}{2}\int_N\sum_{i,I}\left(\sum_{i\notin I}\eta_i-\sum_{i\in I}\eta_i\right)|\omega_I|^2d\widetilde M,
\]
where $\eta_i=-\mu_i$ are the principal curvatures of the geodesic ball centered at $x$.
Thus,  by standard comparison theory this is bounded from below by
\[
\frac{1}{2}\int_N\left((n-p-1)\sqrt{\kappa}\coth\sqrt{\kappa} d_x-p\coth d_x\right)|\omega|^2d\widetilde M.
\]
Now observe that $\widetilde M$ has infinite diameter because $\pi_1(M)$ is infinite. Hence, we can find a sequence $\{x_i\}\subset\widetilde M$ so that $d_{x_i}(y)\to +\infty$ uniformly in $y\in{\rm supp}\,\omega$. By taking $x=x_i$ and passing to  the limit we  obtain the desired result. 
\end{proof}

\begin{remark}
	\label{schick}{\rm Notice that (\ref{dx1}) is meaningful only if $\kappa>\kappa_p$, which forces $\kappa_p<1$, that is, $2p<n-1$. We note that under the conditions above it is proved in \cite{S3}  that 
		\[
		|d\omega|_2+|d^{\star}\omega|_2\geq\frac{1}{2}((n-1)\sqrt{\kappa}-2p)|\omega|_2.
		\]
		This only makes sense if $\kappa>\kappa_p':=4p^2/(n-1)^2$, which again forces $2p<n-1$, but notice that   (\ref{dx1}) gives a better pinching constant if $1\leq p<(n-1)/2$. It is observed in \cite{S3} that 
		\[
		|d\eta|_2+|d^{\star}\eta|_2\geq\frac{1}{2}((n-1)\sqrt{\kappa}-2(n-p))|\eta|_2,
		\]
		for any $p$-form $\eta$ satisfying $\nu\wedge\eta=0$ along $\partial\widetilde M$. Taking $p=n$ and using Hodge duality this means that 
		\begin{equation}\label{schickest}
		|d\varphi|_2\geq\frac{1}{2}(n-1)\sqrt{\kappa}|\varphi|_2,
		\end{equation}
		for any compactly supported function $\varphi$ satisfying Neumann boundary condition. In other words, (\ref{dx1}) holds for $p=0$ as well. This transplants to our setting a famous estimate by McKean \cite{Mc}. Observe however that the assumption on the fundamental
		group is essential in (\ref{schickest}) as  the first Neumann eigenvalue
		of geodesic balls in hyperbolic space converges to zero as the radius goes to
		infinity \cite{C}. Thus, (\ref{schickest}) illustrates a situation where a topological condition on a compact $\partial$-manifold
		poses spectral constraints on its universal cover.
		}
		\end{remark}

With these estimates at hand it is rather straightforward  to establish vanishing theorems for $L^2$ harmonic forms. For this we  consider $(M,g)$ as in Theorem \ref{dxesttheo} and define the absolute Hodge Laplacian $\Delta_p^{\rm abs}$ on $\widetilde M$ with domain  $\mathcal D(\Delta_p^{\rm abs})=\{\omega \in H^2(\wedge^pT^*\widetilde M); \omega_{\rm nor}=0,(d\omega)_{\rm nor}=0\}$. Let $\lambda_p^{\rm abs}(\widetilde g)=\inf {\rm Spec}(\Delta_p^{\rm abs})$. The spectral argument in \cite[Section 6]{S3} then provides, under the conditions of Theorem \ref{dxesttheo}, 
the lower bound
\begin{equation}\label{mckean}
{\lambda_p^{\rm abs}}(\widetilde g)\geq \frac{1}{4}\left((n-p-1)\sqrt{\kappa}-p\right)^2.
\end{equation} 
We remark that the proof in \cite{S3} uses induction in $p$ starting at $p=0$, which corresponnds to (\ref{schickest}). Here we 
use this to prove the following vanishing result.

\begin{proposition}\label{vanisht}
	Let $(M,g)$ be a compact and convex $\partial$-manifold with infinite fundamental group and assume that $g$ is  $\kappa_p$-negatively pinched where $2\leq 2p<n-1$.  Then $\lambda_p^{\rm abs}(\widetilde g)>0$ and  $(\widetilde M,\widetilde g)$ carries no non-trivial absolute $L^2$ harmonic $p$-form. Hence, $H^p_{(2),\rm abs}(\widetilde M,\widetilde g)$ vanishes.
\end{proposition}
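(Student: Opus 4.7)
The proof will be a short assembly from the material just developed. The key input is the eigenvalue lower bound (\ref{mckean}), which has already been extracted from the Donnelly-Xavier-type estimate (\ref{dx1}) via the spectral argument of \cite[Section 6]{S3}. So the plan is to (i) verify that (\ref{mckean}) becomes strictly positive under the $\kappa_p$-pinching hypothesis, (ii) deduce the absence of non-trivial absolute $L^2$ harmonic $p$-forms on $(\widetilde M,\widetilde g)$ from the positivity of the bottom of the spectrum, and (iii) invoke the $L^2$ Hodge-de Rham theory of \cite{S2,S3} to translate this into the vanishing of $H^p_{(2),\mathrm{abs}}(\widetilde M,\widetilde g)$.

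For step (i), the hypothesis that $g$ is $\kappa_p$-negatively pinched means $-1\leq K_{\mathrm{sec}}(g)<-\kappa<0$ for some $\kappa>\kappa_p=p^2/(n-p-1)^2$. In particular $(n-p-1)\sqrt{\kappa}>p$, so the right-hand side of (\ref{mckean}) is strictly positive, giving $\lambda_p^{\mathrm{abs}}(\widetilde g)>0$. Note that the constraint $2\leq 2p<n-1$ guarantees $\kappa_p<1$, so such a pinching is compatible with the curvature normalization $-1\leq K_{\mathrm{sec}}$.

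For step (ii), any absolute $L^2$ harmonic $p$-form $\omega$ on $\widetilde M$ lies in the domain $\mathcal D(\Delta_p^{\mathrm{abs}})$ and satisfies $\Delta_p^{\mathrm{abs}}\omega=0$. But $\lambda_p^{\mathrm{abs}}(\widetilde g)=\inf\mathrm{Spec}(\Delta_p^{\mathrm{abs}})>0$ forces $\omega=0$. For step (iii), the bounded geometry of $(\widetilde M,\widetilde g)$ (inherited from the compactness of $M$) places us in the setting where Schick's $L^2$ Hodge-de Rham theorem identifies $H^p_{(2),\mathrm{abs}}(\widetilde M,\widetilde g)$ with the space of absolute $L^2$ harmonic $p$-forms, so the vanishing of the latter yields the vanishing of the former.

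The only potentially delicate point is the one already handled in \cite{S3}: passing from the compactly supported estimate (\ref{dx1}) to a genuine spectral lower bound. This requires density of smooth compactly supported absolute $p$-forms in $\mathcal D(\Delta_p^{\mathrm{abs}})$ (or at least in the form domain of $\Delta_p^{\mathrm{abs}}$), together with the inductive step that extends the estimate from $p=0$ (McKean's inequality (\ref{schickest})) up through higher $p$. Since both of these are carried out in \cite{S3} and together yield (\ref{mckean}) as quoted in the excerpt, no further work is needed here and the proof reduces to the three arithmetic/spectral observations above.
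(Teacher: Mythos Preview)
Your proof is correct and follows essentially the same approach as the paper's own argument: use the pinching hypothesis together with $2p<n-1$ to guarantee $\kappa_p<1$ and find $\kappa\in(\kappa_p,1)$ making the right-hand side of (\ref{mckean}) strictly positive, then invoke the $L^2$ Hodge-de Rham theory of \cite{S2,S3}. You have simply spelled out in more detail the three steps that the paper compresses into a single sentence.
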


\begin{proof}
	The assumptions imply that $\kappa_p<1$, so we can find $\kappa_p<\kappa< 1$ such that $-1\leq K_{\rm sec}(\widetilde g)\leq -\kappa$. The result follows from (\ref{mckean}) and the $L^2$ Hodge-de Rham theory  in \cite{S2,S3}. 
\end{proof}

\section{The proofs of Theorems \ref{mainesp} and \ref{mainesp2}}\label{proofsmain}

Here we prove the main results of this work. 
Notice that if 
$(r_{(q)},\rho_{(q)})$ is s.s.p. then 
\begin{equation}\label{supfin}
\sup_{x^0\in K}\theta_q(x^0)<+\infty,\,\,\,  {\rm for}\,\,\,  {\rm any}\,\,\, K.
\end{equation}
Also, if $(\alpha,\beta)$ is s.s.p. then $(\overline\alpha,\overline{\beta})$ is s.s.p. as well
for any $\overline{\alpha}\geq\alpha$ and $\overline \beta\geq \beta$.

The proof of Theorem \ref{mainesp} goes as follows. If $M$ is convex with respect to a  $\kappa_p$-negatively pinched metric $g_-$ then $H^p_{(2),\rm abs}(\widetilde M,\widetilde g_-)$ vanishes by Proposition \ref{vanisht}. On the other hand, by standard Hodge theory  for compact $\partial$-manifolds \cite{T}, any non-trivial class in $H^p(M;\mathbb R)$ can be represented by a non-trivial absolute harmonic $p$-form with respect to any metric $g_+$ on $M$. The lift of this form to $(\widetilde M,\widetilde g_+)$ defines a non-trivial absolute harmonic $p$-form which is uniformly bounded. Now, if $g_+$ has both $(r_{(p\pm 1)},\rho_{(p-1)})$ s.s.p.  then the corresponding invariants of $\widetilde g_+$ are s.s.p. as well, since the property is preserved by passage to covers; see Remark \ref{functor}. In particular, (\ref{supfin}) holds with $q=p\pm 1$. Thus we may apply Proposition \ref{import} to conclude that $H^p_{(2),\rm abs}(\widetilde M,\widetilde g_+)\neq \{0\}$. Since $H^p_{(2),\rm abs}(\widetilde M,\cdot)$    is a quasi-isometric invariant of the metric \cite{S3} we obtain a contradiction which completes the proof.

We now consider the case $p=1$ in Theorem \ref{mainesp2}. For its proof we need an extension of a  well-known  result in \cite{LS} to our setting. 

\begin{proposition}
	\label{lyonssull}
	If $(M,g)$ is a compact $\partial$-manifold and $\pi_1(M)$ is nonamenable then $(\widetilde M,\widetilde g)$ carries a nonconstant bounded  absolute harmonic function.   
\end{proposition}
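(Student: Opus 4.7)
The plan is to extend the Lyons-Sullivan discretization method \cite{LS} from the boundaryless setting to the reflecting Brownian motion on $(\widetilde M,\widetilde g)$. Let $\Gamma=\pi_1(M)$, acting freely, properly discontinuously, and isometrically on $\widetilde M$. Under the bounded geometry assumption, the reflecting Brownian motion $\{x^t\}$ is well defined on $\widetilde M$ and is generated by $\frac{1}{2}\Delta_0^{\rm abs}$, the Neumann Laplacian on functions. Consequently, bounded absolute harmonic functions on $\widetilde M$ are precisely the bounded $\{x^t\}$-invariant observables, namely bounded $u$ with $u(x)=\mathbb E_x(u(x^t))$ for all $t\geq 0$.

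To implement the discretization, fix a relatively compact fundamental domain $F\subset\widetilde M$ and a basepoint $x_0\in\mathrm{int}(F)\cap\mathrm{int}(\widetilde M)$. Following \cite{LS}, choose $\Gamma$-equivariant families of balls $D_\gamma\Subset V_\gamma\subset\gamma F\cap\mathrm{int}(\widetilde M)$ (placed in the interior, away from $\partial\widetilde M$). Using the hitting times of $\bigcup_\gamma D_\gamma$ and the exit times of the $V_\gamma$'s, together with an auxiliary Bernoulli randomization weighted by the Lyons-Sullivan balayage measures, one extracts from $\{x^t\}$ a sequence $\{\gamma_n\}\subset\Gamma$ that forms a $\Gamma$-invariant random walk on $\Gamma$. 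The analytic inputs required from the diffusion are uniform Harnack and hitting-distribution estimates on the chosen balls, which are available in the reflecting setting as a consequence of the bounded-geometry Harnack inequality for the Neumann heat kernel \cite{S2,S3}.

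Since $\Gamma$ is nonamenable, the resulting symmetric random walk has spectral radius strictly less than $1$, is transient, and admits nonconstant bounded $\mu$-harmonic functions $u:\Gamma\to\mathbb R$. The Poisson-type reconstruction
\[
\widetilde u(x)\;:=\;\mathbb E_x\!\left(\lim_{n\to\infty}u(\gamma_n)\right)
\]
then yields a bounded function on $\widetilde M$ which is harmonic in $\mathrm{int}(\widetilde M)$ and satisfies $\nu\righthalfcup d\widetilde u=0$ along $\partial\widetilde M$, i.e.\ the absolute boundary condition for $0$-forms. Both properties are automatic from the fact that $\widetilde u$ is a bounded martingale for the reflecting motion. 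Nonconstancy of $\widetilde u$ is inherited from that of $u$ by the strong Markov property, since $\widetilde u(x^{T_n})$ approximates $u(\gamma_n)$ up to a Harnack-controlled error.

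The main technical hurdle is the verification of the hitting probability and Harnack estimates for reflecting Brownian motion needed to run the Lyons-Sullivan machinery $\Gamma$-equivariantly: one must show that, uniformly in $\gamma$, the harmonic measure of $D_\gamma$ on $\partial V_\gamma$ and the relevant escape probabilities admit two-sided comparison with a fixed reference harmonic measure. By placing the balls in the interior and invoking bounded geometry, these reduce to essentially the estimates of \cite{LS} for interior Brownian motion; the only new feature is that trajectories may reflect off $\partial\widetilde M$ between consecutive visits to the balls, but this is controlled by the Gaussian two-sided bounds for the Neumann heat kernel on bounded-geometry $\partial$-manifolds. Once these estimates are in hand, the construction proceeds as in \cite{LS}.
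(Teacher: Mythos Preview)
Your proposal is correct in outline and draws on the same source \cite{LS}, but it follows a genuinely different route from the paper's own argument. The paper does not run the Lyons--Sullivan discretization at all. Instead it invokes the short contrapositive argument from \cite[Section~5]{LS}: using the Neumann heat semigroup one builds a $\pi_1(M)$-equivariant projection $L^\infty_{\rm abs}(\widetilde M)\twoheadrightarrow\mathcal H^\infty_{\rm abs}(\widetilde M,\widetilde g)$, and composes it with the obvious $\pi_1(M)$-equivariant injection $l^\infty(\pi_1(M))\hookrightarrow L^\infty_{\rm abs}(\widetilde M)$. If $\mathcal H^\infty_{\rm abs}(\widetilde M,\widetilde g)=\mathbb R$, the composite map $l^\infty(\pi_1(M))\to\mathbb R$ is a $\pi_1(M)$-invariant mean, forcing $\pi_1(M)$ to be amenable.

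The trade-off is clear. The paper's averaging argument is extremely short and uses only the existence and equivariance of the Neumann heat semigroup, with no need for Harnack inequalities, hitting estimates, or the balayage machinery. Your discretization approach is more laborious---you have to verify uniform hitting/escape estimates for reflecting Brownian motion and justify the Poisson reconstruction---but it is constructive and yields an explicit correspondence between bounded harmonic functions on $(\widetilde M,\widetilde g)$ and $\mu$-harmonic functions on $\Gamma$. One small inaccuracy: the Lyons--Sullivan measure $\mu$ is in general not symmetric; what you actually need (and have) is that $\Gamma$ nonamenable implies every adapted random walk on $\Gamma$ has nontrivial Poisson boundary.
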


\begin{proof}
	The argument in \cite[Section 5]{LS} carries over to our case. More precisely, using the Neumann heat kernel we construct a natural $\pi_1(M)$-equivariant projection from $L^\infty_{\rm abs}(\widetilde M)$, the space of absolute bounded functions, onto $\mathcal H^\infty_{\rm abs}(\widetilde M,\widetilde g)$, the space of bounded absolute harmonic functions. Also, there exists a $\pi_1(M)$-equivariant injection $l^\infty(\pi_1(M))\hookrightarrow L^\infty_{\rm abs}(\widetilde M)$. Hence, if  $\mathcal H^\infty_{\rm abs}(\widetilde M,\widetilde g)=\mathbb R$ the composition $l^\infty(\pi_1(M))\to\mathbb R$ defines an invariant mean.
\end{proof}

We can now finish the proof of Theorem \ref{mainesp2}. If $M$ carries a $\kappa_1$-negatively curved metric $g_-$ then  $H^1_{(2),\rm abs}(\widetilde M,\widetilde g_-)$ vanishes.  On the other hand, by Proposition \ref{lyonssull}, for {any} metric $g_+$ on $M$, $(\widetilde M,\widetilde g_+)$ carries a 
nonconstant bounded  absolute harmonic function, say $f$. This implies that reflecting Brownian motion in $(\widetilde M,\widetilde g_+)$ is transient and in particular  there holds
	\[
	\sup_{x^0\in K}\int_0^{+\infty} (P_td^\star\phi)(x^0)\,dt<+\infty,
	\] 
for any $K\subset\widetilde M$ and compactly supported $1$-form $\phi$ as in Proposition \ref{keyprop}; see  \cite[Theorem 5.1]{G}. Assuming that $g_-$ is such that 
the corresponding pair $(r_{(2)},\rho_{(2)})$ is s.s.p. we can apply Proposition \ref{import} because $\psi=df$ is a bounded absolute harmonic $1$-form. Thus, $H^1_{(2),\rm abs}(\widetilde M,\widetilde g_+)\neq \{0\}$ and we  get a contradiction. This completes the proof of Theorem \ref{mainesp2}.

\section{A Feynman-Kac formula on $\partial$-man\-ifolds}\label{appA}

In this final section we explain how the method put forward in \cite{A, H1} can be adapted to prove a Feynman-Kac-type formula for $q$-forms on $\partial$-manifolds. 
As an illustration of the flexibility of the method we also include a similar formula for spinors evolving by the heat semigroup of the Dirac Laplacian on ${{\rm spin}^c}$ $\partial$-manifolds. 
These results are presented in subsections \ref{appA2} and \ref{appA3}, respectively, after some preparatory material in subsection \ref{appA1}.

\subsection{The Eells-Elworthy-Malliavin approach}\label{appA1}

Let $(N,h)$ be a Riemannian $\partial$-man\-ifold of dimension $n$. As in  Section \ref{setup} we assume that $(N,\partial N,h)$ has bounded geometry. Let $\pi:P_{{\rm O}_n}(N)\to N$ be the orthonormal frame bundle of $N$. This is a principal bundle with structural group ${\rm O}_n$, the orthogonal group in dimension $n$. Any representation $\zeta:{\rm O}_n\to {\rm End}(V)$  gives rise to the associated vector bundle $\mathcal E_{\zeta}=P_{{\rm O}_n}(N)\times_\zeta V$, which comes endowed with a natural metric and compatible connection derived from $h$ and its Levi-Civita connection $\nabla$. Moreover, any section $\sigma\in\Gamma(\mathcal E_\zeta)$ can be identified to its lift $\sigma^\dagger:P_{{\rm O}_n}(N)\to V$,  which is $\zeta$-equivariant in the sense that $\sigma^\dagger(ug)=\zeta(g^{-1})(\sigma^\dagger(u))$, $u\in P_{{\rm O}_n}(N)$, $g\in {\rm O}_n$.  Also, we recall that in terms of lifts, covariant derivation essentially corresponds to Lie differentiation along horizontal tangent vectors.

Any bundle $\mathcal E_\zeta$ as above comes equipped with a second order elliptic operator $\Delta^B=-{\rm tr}_h\nabla^2:\Gamma(\mathcal E_\zeta)\to\Gamma(\mathcal E_\zeta)$, the Bochner Laplacian. Here, $\nabla^2$ is the standard Hessian operator acting on sections. Given an algebraic (zero order) self-adjoint map $\mathcal R\in\Gamma({\rm End}(\mathcal E_\zeta))$ we can form  
the elliptic operator 
\[
\Delta=\Delta^B +\mathcal R
\]
acting on $\Gamma(\mathcal E_\zeta)$. 
Standard results \cite{Ei,S2,S3} imply that the heat semigroup  $P_t=e^{-\frac{1}{2}t\Delta}$  
has the property that, for any $\sigma_0\in L^2\cap L^{\infty}$, $\sigma_t=P_t\sigma_0$ solves the heat equation
\begin{equation}\label{heateq}
\frac{\partial \sigma_t}{\partial t}+\frac{1}{2}\Delta\sigma_t=0,\quad \lim_{t\to 0}\sigma_t=\sigma_0,
\end{equation}
where we eventually impose elliptic boundary conditons in case $\partial N\neq \emptyset$.

An important question concerning us here is whether the solutions of (\ref{heateq}) admit a stochastic representation in terms of Brownian motion on $N$. If $\partial N=\emptyset$ this  problem admits a very elegant solution in great generality and a Feynman-Kac formula is available \cite{E, Gu, H2, H3, Ma, St}. Moreover, this representation permits to estimate the  solutions in terms of the overall expectation of $\mathcal R$ with respect to the diffusion process; see (\ref{feka-1})-(\ref{feka0}) below.  However, in the presence of a boundary it is well-known that  the problem is much harder to handle; see \cite{H1} and the references therein.

Let us assume that  $N$ has a non-empty boundary endowed with an inward unit normal field $\nu$. We first briefly recall how reflecting Brownian motion is defined on $N$. We take for granted that Brownian motion $\{b^t\}$ on $\mathbb R^n$ is defined. 
This is the diffusion process which has half the standard Laplacian $\sum_i\partial_{i}^2$ as generator.
To transplant this to $N$ we   make use of the so-called Eells-Elworthy-Malliavin approach \cite{E, EE, H2, H3, St}. Note  that any $u\in P_{{\rm O}_n}(N)$ defines an isometry $u:\mathbb R^n\to T_xN$, $x=\pi(u)$. Also, the Levi-Civita connection on $TN$ lifts to an Ehresmann connection on $P_{{\rm O}_n}(M)$ which determines fundamental horizontal vector fields $H_i$, $i=1,\cdots,n$. 
As explained in \cite[Chapter 2]{H2}, these elementary remarks naturally lead to an  identification of  semimartingales  on $\mathbb R^n$,  horizontal semimartingales on $P_{{\rm O}_n}(M)$ and semimartingales on $M$. 
Thus, on $P_{{\rm O}_n}(N)$ we may  consider the stochastic differential equation 
\begin{equation}\label{sstoc}
du^t=\sum_{i=1}^nH_i(u^t)\circ db^t_i+\nu^\dagger(u^t)dl^t,
\end{equation} 
which has a unique solution $\{u^t\}$ starting at any initial frame $u^0$. This is a horizontal reflecting Brownian motion on $P_{{\rm O}_n}(N)$ and its projection  $x^t=\pi u^t$ defines reflecting Brownian motion on $N$ starting at $x^0=\pi u^0$. Moreover, $l^t$ is the associated  boundary local time.

\begin{remark}
	\label{functor}{\rm Due to the obvious functorial character of this construction we easily obtain highly desirable prop\-erties of Brownian motion. For instance, if the manifold splits as an isometric product of two other manifolds then its Brownian motion is simply the product of the motions in the factors. In particular, 
	if $N=X\times Y$, where $Y$ is a compact $\partial$-manifold, then Brownian motion in $N$ is transient if and only if the same happens to $X$. Also, if $\widetilde N\to N$ is a normal Riemannian covering then  Brownian motion in $\widetilde N$ projects down to Brownian motion in $N$. From this it is  obvious that a pair $(\alpha,\beta)$ on $(N,\partial N)$ is s.s.p.  if and only if its lift $(\widetilde \alpha,\widetilde \beta)$ on $(\widetilde N,\partial \widetilde N)$ is s.s.p. as well.}
\end{remark}

We now describe how this formalism leads to an elegant approach to Feyn\-man-Kac-type formulas.
Let $\mathcal A\in\Gamma({\rm End}(\mathcal E_\zeta|_{\partial N})$ be a pointwise self-adjoint map. In practice, $\mathcal A$ relates to  the zero order piece of the given boundary conditions. In analogy with the boundaryless case, It\^o's calculus suggests to consider the multiplicative functional $M^t\in{\rm End}(V)$ satisfying 
\[
dM^t+M^t\left(\frac{1}{2}\mathcal R^\dagger dt+\mathcal A^\dagger dl^t\right)=0, \quad M^0=I.
\]
Standard results imply that a solution exists along each path $u^t$.
We now apply It\^o's formula to the process $M^t\sigma^\dagger(T-t,u^t)$,  $0\leq t\leq T$, where $\sigma$ is a (time-dependent) section of $\mathcal E_\zeta$. With the help of  (\ref{sstoc}) we obtain  
\begin{eqnarray*}
dM^t\sigma^\dagger(T-t,u^t) & = & \left[M^t\mathcal L_{H}\sigma^\dagger(T-t,u^t),db^t\right] - M^t L^\dagger\sigma^\dagger(T-t,u^t)dt\\
& & \quad +M^t\left(\mathcal L_{\nu^\dagger}-\mathcal A^\dagger\right)\sigma^\dagger(T-t,u^t)dl^t,
\end{eqnarray*}
where $\mathcal L$ is Lie derivative,
\[ 
\left[M^t\mathcal L_{H}\sigma^\dagger(T-t,u^t),db_t\right]_i=\sum_{j=1}^{{\rm dim}\, V}\sum_{k=1}^n M^t_{ij}\mathcal L_{H_k}\sigma_j^\dagger(T-t,u^t)db^t_k,
\]
and 
\[
L^\dagger=\frac{\partial }{\partial t}+\frac{1}{2}\left(\Delta^\dagger_B+\mathcal R ^\dagger\right)
\]
is the lifted heat operator, with 
$\Delta_B^\dagger=-\sum_k\mathcal L_{H_k}^2$ being the horizontal Bochner Laplacian. 
Notice that in case $\partial N=\emptyset$ and $\sigma$ satisfies (\ref{heateq}) the computation gives 
\[
dM^t\sigma^\dagger(T-t,u^t)  =  \left[M^t\mathcal L_{H}\sigma^\dagger(T-t,u^t),db^t\right],
\]
which characterizes $M^t\sigma^\dagger(T-t,u^t)$ as a martingale.  Equating the expectations of this process at $t=0$ and $t=T$ 
yields the celebrated Feynman-Kac formula
\begin{equation}\label{feka-1}
\sigma^\dagger(t,u^0)=\mathbb E_{u^0}\left(M^t\sigma^\dagger(0,u^t)\right),
\end{equation}
where $dM^t=-M^t\mathcal R^\dagger dt/2$ \cite{E, H2, H3, Gu, St}. From this we easily obtain the well-known  estimate 
\begin{equation}\label{feka0}
|\sigma(t,x^0)|\leq\mathbb E_{x^0}\left(|\sigma(0,x^t)|\exp\left(-\frac{1}{2}\int_0^t\underline{\mathcal R}(x^s)ds\right)\right),
\end{equation}
where $\underline{\mathcal R}(x)$ is the least eigenvalue of $\mathcal R(x)$. However, if $\partial N\neq\emptyset$ the calculation merely  says
that  $\{M^t\}$ is the multiplicative functional associated with the operator $L$ under  boundary conditions
\begin{equation}\label{boundcond}
(\nabla_{\nu}-\mathcal A)\sigma=0.
\end{equation}
As we shall see below through examples, (\ref{boundcond}) is too stringent to encompass  boundary conditions commonly occurring in applications. 

\subsection{The Feynman-Kac formula for absolute differential forms}\label{appA2}

It turns out that natural elliptic boundary conditions do not quite fit into the  prescription 
in (\ref{boundcond}) essentially because  they are formulated in terms of projections. Hence, the formalism in the previous subsection does not apply as presented. We illustrate this  issue by considering  the case $\zeta=\wedge^q\mu_n^*$, where $\mu_n$ is the birth certificate representation of ${\rm O}_n$, so that $\mathcal E_\zeta$ is the bundle of $q$-forms over $N$. In this case, $\mathcal A$ is explicitly described in terms of the second fundamental form of $\partial N$ but degeneracies occur due to the splitting of forms into tangential and normal components which is inherent to absolute boundary conditions.

The splitting is determined by the ``fermionic relation'' $\nu\righthalfcup\nu\wedge+\nu\wedge\nu\righthalfcup=I$, which induces an orthogonal decomposition  
\[
\wedge^qT^*N|_{\partial N}={\rm Ran}(\nu\righthalfcup\nu\wedge)\oplus{\rm Ran}(\nu\wedge\nu\righthalfcup),
\]
and we denote by $\Pi_{\rm tan}$ and $\Pi_{\rm nor}$ the orthogonal projections onto the factors. As it is clear from the notation, these maps project  onto the space of tangential and normal $q$-forms, respectively.

Let $A:T\partial N\to T\partial N$, $AX=-\nabla_X\nu$, be the second fundamental form of $\partial N$,  which we extend to $TN|_{\partial N}$ by declaring that $A\nu=0$. This induces the pointwise self-adjoint  map $\mathcal A_{q}\in{\rm End}(\wedge^qT^*N|_{\partial N})$, 
\[
(\mathcal A_{q}\omega)(X_1,\cdots,X_q)=
\sum_i\omega(X_1,\cdots,AX_i,\cdots,X_q).
\] 
Notice that $\Pi_{\rm nor}\mathcal A_q\omega=0$, that is,  $\mathcal A_q\omega$ only has tangential components. In order to determine the tangential coefficients of $\mathcal A_q\omega$ we fix an orthonormal frame  $\{e_1,\cdots,e_{n-1}\}$ in $T\partial N$ 
which is principal at $x\in\partial N$ in the sense that $Ae_i=\rho_ie_i$. We then find that, at $x$, 
\begin{equation}\label{clear}
(\mathcal A_q\omega)(e_{i_1},\cdots,e_{i_q})=
\left(\sum_{j=1}^q\rho_{i_j}\right)\omega(e_{i_1},\cdots,e_{i_q}).
\end{equation}

The next result is inspired by \cite[Lemma 4.1]{H1}; see also \cite{Y,DL} for similar computations.

\begin{proposition}
	\label{liftboundcond}
	A $q$-form $\omega$ is absolute if and only if its lift $\omega^\dagger$ satisfies
	\begin{equation}\label{degen}
	 \Pi^\dagger_{\rm nor}\omega^\dagger=0\quad{\rm and}\quad\quad\Pi^\dagger_{\rm tan}(\mathcal L_{\nu^\dagger}-\mathcal A_q^\dagger)\omega^\dagger=0\quad {\rm on}\quad \partial P_{{\rm O}_n}(N).
	\end{equation}
\end{proposition}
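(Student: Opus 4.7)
The plan is to verify the two conditions in (\ref{degen}) one at a time, using the first to make the second meaningful. The equivalence $\omega_{\rm nor}=0 \Leftrightarrow \Pi^\dagger_{\rm nor}\omega^\dagger=0$ on $\partial P_{{\rm O}_n}(N)$ is automatic, since $\Pi_{\rm nor}$ is a pointwise bundle endomorphism along $\partial N$ and the dictionary between sections of an associated bundle and their equivariant lifts is injective on fibrewise values. Granting this, I will show that under $\omega_{\rm nor}=0$, the second absolute boundary condition $(d\omega)_{\rm nor}=0$ is equivalent to $\Pi_{\rm tan}(\nabla_\nu\omega-\mathcal A_q\omega)=0$ on $\partial N$. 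Transplanting this to the frame bundle then produces the stated criterion, since $\nabla_\nu$ corresponds to $\mathcal L_{\nu^\dagger}$ on equivariant lifts while $\mathcal A_q$ and $\Pi_{\rm tan}$ are fibrewise endomorphisms whose lifts make sense on $\partial P_{{\rm O}_n}(N)$.

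The key computation proceeds from the torsion-free Koszul formula
\[
d\omega(Y_0,\ldots,Y_q)=\sum_{i=0}^q(-1)^i(\nabla_{Y_i}\omega)(Y_0,\ldots,\hat Y_i,\ldots,Y_q),
\]
evaluated at $Y_0=\nu$ and $Y_1,\ldots,Y_q\in T\partial N$. The $i=0$ contribution is $(\nabla_\nu\omega)(Y_1,\ldots,Y_q)$. For $i\geq 1$, expanding $(\nabla_{Y_i}\omega)(\nu,Y_1,\ldots,\hat Y_i,\ldots,Y_q)$ via the Leibniz rule produces three kinds of terms. First, the tangential derivative $Y_i(\omega(\nu,Y_1,\ldots,\hat Y_i,\ldots,Y_q))$ is a derivative along $\partial N$ of a component of $\omega_{\rm nor}$, hence vanishes by hypothesis. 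Second, the $\nu$-slot correction $-\omega(\nabla_{Y_i}\nu,Y_1,\ldots,\hat Y_i,\ldots,Y_q)$ becomes $\omega(AY_i,Y_1,\ldots,\hat Y_i,\ldots,Y_q)$ after using $\nabla_{Y_i}\nu=-AY_i$. Third, each correction $-\omega(\nu,\ldots,\nabla_{Y_i}Y_j,\ldots)$ splits via the Gauss formula $\nabla_{Y_i}Y_j=(\nabla_{Y_i}Y_j)^{\rm tan}+II(Y_i,Y_j)\nu$ into a piece producing a component of $\omega_{\rm nor}$ and a piece with two $\nu$ entries, both vanishing.

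Moving $AY_i$ from the leading slot to slot $i$ by $i-1$ transpositions introduces a factor $(-1)^{i-1}$, which combines with the outer $(-1)^i$ to a global minus sign; summing over $i$ and invoking (\ref{clear}) assembles the result into $-(\mathcal A_q\omega)(Y_1,\ldots,Y_q)$. Collecting both surviving contributions,
\[
d\omega(\nu,Y_1,\ldots,Y_q)=(\nabla_\nu\omega-\mathcal A_q\omega)(Y_1,\ldots,Y_q),\qquad Y_i\in T\partial N.
\]
Both $(d\omega)_{\rm nor}=\nu\righthalfcup d\omega$ and $\Pi_{\rm tan}(\nabla_\nu\omega-\mathcal A_q\omega)$ are tangential $q$-forms along $\partial N$ and so are determined by their evaluations on tangent frames; the displayed identity therefore forces $(d\omega)_{\rm nor}=\Pi_{\rm tan}(\nabla_\nu\omega-\mathcal A_q\omega)$, and passing to equivariant lifts finishes the argument. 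The main obstacle is the Koszul bookkeeping above: tracking signs carefully and verifying that every spurious term is killed by either the hypothesis $\omega_{\rm nor}=0$ or the antisymmetry of $\omega$; no conceptual difficulty is expected.
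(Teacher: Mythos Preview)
Your proof is correct and follows essentially the same approach as the paper: both arguments reduce the second absolute condition to the tangential identity $d\omega(\nu,Y_1,\ldots,Y_q)=(\nabla_\nu\omega-\mathcal A_q\omega)(Y_1,\ldots,Y_q)$ under $\omega_{\rm nor}=0$, and then lift. The only cosmetic difference is that the paper computes via the Cartan formula with Lie brackets in a commuting principal frame, whereas you use the torsion-free covariant expression $d\omega=\sum(-1)^i\nabla_{Y_i}\omega(\ldots)$ directly; your version has the minor advantage of not needing the auxiliary assumption $[e_{i_j},e_{i_k}]=0$.
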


\begin{proof}
We work downstairs on $\partial N$ and drop the dagger from the notation. First, $\omega_{\rm nor}=0$ means that  $\omega=\omega_{\rm tan}+\nu\wedge\omega_{\rm nor}=\omega_{\rm tan}$, 
that is, $\Pi^\dagger_{\rm nor}\omega^\dagger=0$. On the other hand, in terms of the principal frame $\{e_i\}$ above,  
\begin{eqnarray*}
\nu\righthalfcup d\omega(e_{i_1},\cdots, e_{i_q})& = & d\omega(\nu,e_{i_1},\cdots, e_{i_q})\\
& = & \nu(\omega(e_{i_1},\cdots, e_{i_q}))+\sum_j(-1)^je_{i_j}(\omega(\nu,e_{i_1},\cdots,\widehat{e_{i_j}},\cdots,e_{i_q}))\\
& & \quad +\sum_j(-1)^j\omega([\nu,e_{i_j}],e_{i_1},\cdots,\widehat{e_{i_ j}},\cdots,e_{i_q})\\
& & \quad\quad +\sum_{1\leq j<k}(-1)^{j+ k}\omega([e_{i_j},e_{i_k}],\nu,e_{i_1},\cdots,\widehat{e_{i_j}},\cdots,\widehat{e_{i_k}},\cdots,e_{i_q})\\
&= & \nu(\omega(e_{i_1},\cdots, e_{i_q}))+\sum_j(-1)^je_{i_j}((\nu\righthalfcup\omega)(e_{i_1},\cdots,\widehat{e_{i_j}},\cdots,e_{i_q}))\\
& & \quad -\sum_j\omega(e_{i_1},\cdots,[\nu,e_{i_j}],\cdots,e_{i_q})\\
&= & \nu(\omega(e_{i_1},\cdots, e_{i_q}))
-\sum_j\omega(e_{i_1},\cdots,\nabla_\nu e_{i_j},\cdots,e_{i_q})\\
& & \quad -\left(\sum_j\rho_{i_j}\right)\omega(e_{i_1},\cdots,e_{i_q}),
\end{eqnarray*}
where we used that  $[e_{i_j},e_{i_k}]=0$, certainly a justifiable assumption, and $\nu\righthalfcup\omega=0$.
But
\[
\nu(\omega(e_{i_1},\cdots, e_{i_q}))=(\nabla_\nu\omega)(e_{i_1},\cdots,e_{i_q})+
\sum_j\omega(e_{i_1},\cdots,\nabla_{\nu} e_{i_j},\cdots,e_{i_q}),
\]
so we obtain 
\[
\nu\righthalfcup d\omega(e_{i_1},\cdots, e_{i_q})=\left(\nabla_\nu-\sum_j\rho_{i_j}\right)\omega(e_{i_1},\cdots, e_{i_q}).
\]
The results follows in view of (\ref{clear}).
\end{proof}

This proposition confirms the inevitable emergence of degeneracies in the context of absolute boundary conditions due to the projections. To remedy this we proceed as in \cite{H1}. We can express the boundary condition as the superposition of two independent components, namely,
\[
\Pi^\dagger_{\rm tan}(\mathcal L_{\nu^\dagger}-\mathcal A_q^\dagger)\omega^\dagger-\Pi^\dagger_{\rm nor}\omega^\dagger=0.
\] 
The key idea, which goes back to \cite{A},  is to fix $\epsilon>0$ and replace $\Pi_{\rm tan}^\dagger$ by $\Pi_{\rm tan}^\dagger+\epsilon { I}$ above, so   
the condition becomes 
\[
\left(\mathcal L_{\nu^\dagger}-\left(\mathcal A^\dagger_q+\epsilon^{-1}\Pi_{\rm nor}^\dagger\right)\right)\omega^\dagger=0,
\] 
which in a sense is the best we can reach in terms of resemblance to (\ref{boundcond}). The next step is to solve for  $\mathcal M^t_\epsilon\in{\rm End}(\wedge^q\mathbb R^n)$ in 
\begin{equation}\label{anteito}
d\mathcal M^t_\epsilon+\mathcal M^t_\epsilon\left(\frac{1}{2}R^\dagger_q(u^t)dt+\left(\mathcal A_q^\dagger(u^t)+\epsilon^{-1}\Pi_{\rm nor}^\dagger(u^t)\right)dl^t\right)=0,\quad \mathcal M^0_\epsilon=I.
\end{equation}

\begin{proposition}
	\label{unifcont} For all $\epsilon>0$ such that $\epsilon^{-1}\geq \underline{\rho}_{(q)}$ we have
	\begin{equation}
	\label{unifcont2}
	|\mathcal M^t_\epsilon|\leq \exp\left(-\frac{1}{2}\int_0^tr_{(q)}(x^s)ds-\int_0^t\rho_{(q)}(x^s)dl^s\right), \quad t>0.
	\end{equation}
\end{proposition}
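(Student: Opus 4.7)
My plan is to reduce the operator norm bound to a pathwise Gr\"onwall argument applied to the adjoint of $\mathcal M^t_\epsilon$. Since $R_q^\dagger$, $\mathcal A_q^\dagger$, and $\Pi_{\rm nor}^\dagger$ are all pointwise self-adjoint, taking the adjoint of (\ref{anteito}) converts the right-multiplication ODE for $\mathcal M^t_\epsilon$ into a left-multiplication ODE for $(\mathcal M^t_\epsilon)^*$. Fixing $v\in\wedge^q\mathbb R^n$ and setting $w(t):=(\mathcal M^t_\epsilon)^*v$, I obtain
\[
dw(t)=-\tfrac{1}{2}R_q^\dagger(u^t)\,w(t)\,dt-\bigl(\mathcal A_q^\dagger+\epsilon^{-1}\Pi_{\rm nor}^\dagger\bigr)(u^t)\,w(t)\,dl^t,\qquad w(0)=v.
\]
This is a genuine pathwise ODE along $\{u^t\}$: equation (\ref{anteito}) carries no $db^t$ increment, and both $dt$ and $dl^t$ have finite variation, so no It\^o correction appears and each sample path may be analysed deterministically.

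Next I would differentiate $f(t):=|w(t)|^2$. Self-adjointness of the coefficients gives
\[
df=-\langle R_q^\dagger w,w\rangle\,dt-2\bigl\langle(\mathcal A_q^\dagger+\epsilon^{-1}\Pi_{\rm nor}^\dagger)w,w\bigr\rangle\,dl^t.
\]
The interior term is controlled by the very definition of $r_{(q)}$, giving $\langle R_q^\dagger w,w\rangle\geq r_{(q)}(x^t)|w|^2$. For the boundary term I split $w=w_{\rm tan}+w_{\rm nor}$ under the fermionic decomposition. Formula (\ref{clear}) shows that $\mathcal A_q^\dagger$ annihilates $w_{\rm nor}$ and acts on $w_{\rm tan}$ with eigenvalues $\sum_j\rho_{i_j}$, so $\langle\mathcal A_q^\dagger w,w\rangle\geq\rho_{(q)}(x^t)|w_{\rm tan}|^2$; meanwhile $\langle\Pi_{\rm nor}^\dagger w,w\rangle=|w_{\rm nor}|^2$ by construction. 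Combining these two and using the hypothesis to dominate $\rho_{(q)}(x^t)$ by $\epsilon^{-1}$ on the normal piece delivers the unified lower bound $\langle(\mathcal A_q^\dagger+\epsilon^{-1}\Pi_{\rm nor}^\dagger)w,w\rangle\geq\rho_{(q)}(x^t)|w|^2$.

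Feeding these two estimates back into the expression for $df$, dividing by $f$, and integrating pathwise then produces
\[
|w(t)|^2\leq|v|^2\exp\!\left(-\int_0^t r_{(q)}(x^s)\,ds-2\int_0^t\rho_{(q)}(x^s)\,dl^s\right),
\]
whereupon ranging $v$ over the unit sphere and using $|\mathcal M^t_\epsilon|=|(\mathcal M^t_\epsilon)^*|$ yields the stated bound after extracting the square root. The one delicate point I anticipate is precisely the absorption step for the normal component: the stated hypothesis $\epsilon^{-1}\geq\underline\rho_{(q)}$ only controls $\epsilon^{-1}$ by the \emph{infimum} of $\rho_{(q)}$, whereas the above argument requires $\epsilon^{-1}\geq\rho_{(q)}(x^t)$ pointwise along the sample path on $\partial N$. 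Under the standing bounded-geometry assumption $\rho_{(q)}$ is uniformly bounded on $\partial N$, so this is resolved either by tightening $\epsilon$ so that $\epsilon^{-1}\geq\sup_{\partial N}\rho_{(q)}$, or by first carrying $\min(\rho_{(q)}(x^t),\epsilon^{-1})$ through the estimate and recovering the clean exponent in the limit $\epsilon\downarrow 0$ which is where (\ref{anteito}) is ultimately used.
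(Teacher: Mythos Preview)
Your approach---a pathwise Gr\"onwall argument on $|(\mathcal M^t_\epsilon)^*v|^2$ using the self-adjointness of the coefficients---is exactly the method behind \cite[Lemma 3.1]{H1}, to which the paper simply defers. So the overall strategy is correct and matches the paper.

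One correction is in order. Your claim that ``$\mathcal A_q^\dagger$ annihilates $w_{\rm nor}$'' is only valid for $q=1$. For $q\geq 2$, $\mathcal A_q$ preserves the tangential/normal splitting but does \emph{not} kill the normal piece: a direct computation with the principal frame shows that the basis form $e_{i_1}^*\wedge\cdots\wedge e_{i_{q-1}}^*\wedge\nu^*$ is an eigenvector of $\mathcal A_q$ with eigenvalue $\sum_{k=1}^{q-1}\rho_{i_k}$. Formula (\ref{clear}) only records the tangential eigenvalues, and the paper's own phrasing ``(possibly non-null) eigenvalues'' in the proof sketch reflects the same slight imprecision. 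Fortunately this does not damage the argument: the minimum eigenvalue of $\mathcal A_q+\epsilon^{-1}\Pi_{\rm nor}$ on the normal block is $\rho_{(q-1)}(x^t)+\epsilon^{-1}$, which still dominates $\rho_{(q)}(x^t)$ once $\epsilon^{-1}\geq\sup_{\partial N}(\rho_{(q)}-\rho_{(q-1)})$. This falls squarely under the resolution you already proposed for the threshold issue---tighten $\epsilon$, or carry the minimum through and recover the clean exponent in the limit $\epsilon\downarrow 0$---so the proof goes through unchanged in spirit.
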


\begin{proof}
	The same as in \cite[Lemma 3.1]{H1}, once we take into account that, as it is clear from (\ref{clear}), the sums $\sum_{j=1}^q\rho_{i_j}$ are the (possibly non-null) eigenvalues of $\mathcal A_q$.
	\end{proof}
	
The following convergence result provides the crucial input in the argument.

\begin{theorem}
	\label{converg}
	As $\epsilon\to 0$, $\mathcal M^t_\epsilon$ converges to a multiplicative functional $\mathcal M^t$ in the sense that $\lim_{\epsilon\to 0}\mathbb E|\mathcal M^t_\epsilon-\mathcal M^t|^2=0$. Moreover, $\mathcal M^t\Pi_{\rm nor}^\dagger(u)=0$ whenever $u\in\partial O_N$.
\end{theorem}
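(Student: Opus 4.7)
The plan is to follow the approach of \cite[Theorem 3.1]{H1}, adapted from $1$-forms to general $q$-forms. There are three stages: first show that $\{\mathcal M^t_\epsilon\}_{\epsilon>0}$ is Cauchy in $L^2$ as $\epsilon\to 0^+$, thereby defining the limit $\mathcal M^t$; then verify multiplicativity; and finally establish the kernel property $\mathcal M^t\Pi_{\rm nor}^\dagger(u)=0$ for $u\in\partial O_N$. The uniform bound in Proposition \ref{unifcont} supplies the $L^\infty$ control that legitimizes all these passages to the limit.

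The algebraic mechanism that makes the argument work in the $q$-form setting is the identity $\mathcal A_q\Pi_{\rm nor}=0$, which is immediate from (\ref{clear}) since $\mathcal A_q$ preserves the tangential subspace and vanishes on the normal one; lifting to $P_{{\rm O}_n}(N)$ gives $\mathcal A_q^\dagger\Pi_{\rm nor}^\dagger=0$. Multiplying (\ref{anteito}) on the right by $\Pi_{\rm nor}^\dagger$ thus eliminates the $\mathcal A_q^\dagger\,dl^t$ contribution and leaves a schematic dissipative SDE
\[
d(\mathcal M^t_\epsilon\Pi_{\rm nor}^\dagger) + \mathcal M^t_\epsilon\Pi_{\rm nor}^\dagger\bigl(\tfrac{1}{2}R_q^\dagger\,dt + \epsilon^{-1}\,dl^t\bigr) = (\text{commutator terms}),
\]
where the commutator comes from differentiating the position-dependent factor $\Pi_{\rm nor}^\dagger(u^t)$ along the horizontal flow. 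A Gronwall estimate against $l^t$ then yields the key bound $\mathbb E|\mathcal M^t_\epsilon\Pi_{\rm nor}^\dagger(u^t)|^2\to 0$ as $\epsilon\to 0^+$, uniformly on bounded time intervals.

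With this decay in hand, the Cauchy property for $N^t:=\mathcal M^t_{\epsilon_1}-\mathcal M^t_{\epsilon_2}$ follows by applying It\^o's formula to $|N^t|^2$: the $dt$-terms are controlled by boundedness of $R_q^\dagger$ and give a standard Gronwall factor; the $\mathcal A_q^\dagger\,dl^t$ pieces are likewise innocent; and the potentially singular difference $(\epsilon_1^{-1}\mathcal M^t_{\epsilon_1}-\epsilon_2^{-1}\mathcal M^t_{\epsilon_2})\Pi_{\rm nor}^\dagger\,dl^t$ is absorbed via Cauchy-Schwarz together with the key estimate, which exactly compensates the $\epsilon_i^{-1}$ factors. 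The limit $\mathcal M^t$ inherits multiplicativity from $\mathcal M^{s+t}_\epsilon=\mathcal M^s_\epsilon\cdot(\mathcal M^t_\epsilon\circ\theta_s)$ through the Markov property of reflecting Brownian motion, and the kernel assertion on $\partial O_N$ is immediate from the same $L^2$-decay. The main obstacle is the key estimate itself: one must first extend $\Pi_{\rm nor}$ smoothly from $\partial N$ to a tubular neighborhood so that the It\^o calculus is meaningful, and then control the commutator terms uniformly in $\epsilon$ using the bounded-geometry hypothesis on $(N,\partial N,h)$ to keep the relevant geometric quantities bounded throughout the evolution.
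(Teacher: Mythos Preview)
Your approach is exactly the one the paper takes: it defers entirely to \cite{H1}, noting that Hsu's proof carries over once the projections $P,Q$ there (normal and tangential for $1$-forms) are replaced by $\Pi_{\rm nor},\Pi_{\rm tan}$ acting on $q$-forms. Your three-stage outline matches the structure of that argument.

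One correction, however: the identity $\mathcal A_q\Pi_{\rm nor}=0$ that you isolate as the key algebraic mechanism is false for $q\geq 2$. Equation (\ref{clear}) only records the action of $\mathcal A_q$ on \emph{tangential} basis forms; on normal ones a direct computation in a principal frame gives $\mathcal A_q(\nu\wedge e_J)=\nu\wedge\mathcal A_{q-1}e_J$, which is generally nonzero. What is true, and what actually suffices, is that $\mathcal A_q$ \emph{commutes} with $\Pi_{\rm nor}$, since it preserves both the tangential and the normal subspaces. Consequently, after right-multiplying (\ref{anteito}) by $\Pi_{\rm nor}^\dagger$, the $\mathcal A_q^\dagger\,dl^t$ contribution does not drop out but becomes $(\mathcal M^t_\epsilon\Pi_{\rm nor}^\dagger)\,\mathcal A_q^\dagger\,dl^t$, a bounded perturbation that is absorbed by the same Gronwall-type estimate used for the commutator and $R_q^\dagger\,dt$ terms. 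With this emendation your sketch is sound and coincides with the paper's intended argument.
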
 	

\begin{proof}	
The rather technical proof of this result for $q=1$ is presented in detail in \cite{H1}. Fortunately, with  the formalism above in place, it is not hard to check that the proof of the general case only adds notational difficulties to the original argument. More precisely, in \cite{H1} the letters  $P$ and $Q$ denote normal e tangential projection, respectively. If we replace these symbols by $\Pi_{\rm nor}$ and $\Pi_{\rm tan}$, the proof there works here with minor modifications. Therefore, it is omitted.
\end{proof}

We now have all the ingredients needed to prove the Feynman-Kac-type formula for differential forms.

\begin{theorem}
	\label{fkac}
	Let $\omega_0$ be an absolute $L^2$ $q$-form on $N$ as above. If $P_t=e^{-\frac{1}{2}t\Delta^{\rm abs}_q}$ is the corresponding heat semigroup, so that $\omega_t=P_t\omega_0$ provides the solution to 
	\begin{equation}\label{solinit}
	\frac{\partial \omega_t}{\partial t}+\frac{1}{2}\Delta^{\rm abs}_q\omega_t=0,\quad \lim_{t\to 0}\omega_t=\omega_0, \quad \nu\righthalfcup\omega_t=0,\quad \nu\righthalfcup d\omega_t=0, 
	\end{equation}
	then the following Feynman-Kac formula holds: 
	\begin{equation}
	\label{fkac2}
	\omega^\dagger_t(u^0)=\mathbb E_{u^0}\left(\mathcal M^t\omega_{0}^\dagger(u^t)\right),
	\end{equation} 
	where $u_t$ is the horizontal reflecting Brownian motion starting at $u_0$. As a consequence, 
	\begin{equation}
	\label{controlsol}
	|\omega_t(x^0)|\leq\mathbb E_ {x^0}\left(|\omega_0({x^t})|\exp\left(-\frac{1}{2}\int_0^tr_{(q)}(x^s)ds
	-\int_0^t\rho_{(q)}(x^s)dl^s\right)\right),
	\end{equation}
	where $x^t=\pi u^t$.
\end{theorem}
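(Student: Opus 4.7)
The plan is to substitute $\mathcal{R}=R_q$ and $\mathcal{A}^\dagger=\mathcal{A}_q^\dagger+\epsilon^{-1}\Pi^\dagger_{\rm nor}$ into the general It\^o expansion displayed after (\ref{anteito}), apply it to the process $\mathcal{M}^t_\epsilon\,\omega^\dagger_{T-t}(u^t)$, and then send $\epsilon\to 0$ using Theorem \ref{converg}. Because $\omega_t$ solves (\ref{solinit}), its lift satisfies $L^\dagger\omega^\dagger(T-t,u^t)=0$, so the $dt$ contribution drops out. After integrating from $0$ to $T$ and taking expectations (the martingale part having vanishing expectation, its square-integrability following from bounded geometry combined with elliptic regularity for $P_t\omega_0$), one is left with
\[
\omega_T^\dagger(u^0)-\mathbb{E}_{u^0}\bigl[\mathcal{M}^T_\epsilon\,\omega_0^\dagger(u^T)\bigr]
=-\mathbb{E}_{u^0}\!\int_0^T\! \mathcal{M}^t_\epsilon\bigl(\mathcal{L}_{\nu^\dagger}-\mathcal{A}_q^\dagger-\epsilon^{-1}\Pi^\dagger_{\rm nor}\bigr)\omega_{T-t}^\dagger(u^t)\,dl^t.
\]

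The boundary conditions from Proposition \ref{liftboundcond} now do the decisive work on the integrand, which is supported on $\{u^t\in\partial P_{{\rm O}_n}(N)\}$. First, $\Pi^\dagger_{\rm nor}\omega^\dagger=0$ along the boundary annihilates the singular penalty $\epsilon^{-1}\Pi^\dagger_{\rm nor}\omega^\dagger_{T-t}$ pointwise, neutralizing the divergent coefficient. Second, $\Pi^\dagger_{\rm tan}(\mathcal{L}_{\nu^\dagger}-\mathcal{A}_q^\dagger)\omega^\dagger=0$ together with $\Pi^\dagger_{\rm tan}+\Pi^\dagger_{\rm nor}=I$ lets me rewrite what remains as $\mathcal{M}^t_\epsilon\,\Pi^\dagger_{\rm nor}(\mathcal{L}_{\nu^\dagger}-\mathcal{A}_q^\dagger)\omega^\dagger_{T-t}$ on the boundary, i.e. with $\Pi^\dagger_{\rm nor}$ wedged between $\mathcal M^t_\epsilon$ and a pointwise bounded field. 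At this point Proposition \ref{unifcont} yields uniform boundedness of $\mathcal{M}^t_\epsilon$ by the exponential weight appearing on the right-hand side of (\ref{controlsol}), which together with the $L^2$ convergence in Theorem \ref{converg} legitimizes passing to the limit inside both the expectation and the local time integral. The key second assertion $\mathcal{M}^t\Pi^\dagger_{\rm nor}(u^t)=0$ on $\partial P_{{\rm O}_n}(N)$ then forces the limiting boundary integrand to vanish, giving $\omega_T^\dagger(u^0)=\mathbb{E}_{u^0}[\mathcal{M}^T\omega_0^\dagger(u^T)]$, which is (\ref{fkac2}).

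The pointwise bound (\ref{controlsol}) is then immediate: taking norms in (\ref{fkac2}), bringing them inside the expectation via Jensen, using the isometry $|\omega_t^\dagger(u^0)|=|\omega_t(x^0)|$, and inserting the $\epsilon\to 0$ limit of the estimate in Proposition \ref{unifcont} produces the desired exponential weight. The main obstacle I foresee lies in the limit step of the previous paragraph: one must quantitatively track how the diverging coefficient $\epsilon^{-1}$ inside $\mathcal{M}^t_\epsilon$ is tamed by the off-diagonal structure of the normal projection before the boundary condition cancels it, and one must justify a dominated-convergence argument inside a stochastic integral against $dl^t$. These are precisely the technical points handled by Theorem \ref{converg}, whose proof, as indicated there, is obtained by transcribing the $q=1$ argument of \cite{H1} with the substitutions $P\leadsto\Pi_{\rm nor}$, $Q\leadsto\Pi_{\rm tan}$, supported by the uniform regularity of $\omega_t^\dagger$ and its normal derivative near $\partial N$ supplied by the Neumann-type elliptic theory of \cite{S2,S3} under bounded geometry.
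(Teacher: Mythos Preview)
Your proof is correct and follows essentially the same route as the paper: apply It\^o's formula to $\mathcal M^t_\epsilon\,\omega^\dagger_{T-t}(u^t)$, use (\ref{solinit}) to kill the $dt$ term and Proposition \ref{liftboundcond} to kill the $\epsilon^{-1}\Pi^\dagger_{\rm nor}$ term, then pass to the limit via Theorem \ref{converg} and invoke $\mathcal M^t\Pi^\dagger_{\rm nor}=0$ together with the second half of Proposition \ref{liftboundcond} to dispose of the remaining $dl^t$ integrand, leaving a martingale. The paper's write-up differs only cosmetically in the order of operations (it sends $\epsilon\to 0$ at the level of the differential before integrating), and your added remarks on the dominated-convergence justification inside the $dl^t$ integral are a welcome elaboration of what the paper leaves implicit.
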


\begin{proof}
It\^o's formula  and (\ref{sstoc}) yield 
\begin{eqnarray*}
		d\mathcal M^t_\epsilon\omega^\dagger_{T-t}(u^t) & = & 
		\left[\mathcal M^t_\epsilon\mathcal L_{H}\omega^\dagger_{T-t}(u^t),db^t\right]
		- \mathcal M^t_\epsilon L^\dagger\omega^\dagger_{T-t}(u^t)dt\\
		& & \quad +\mathcal M^t_\epsilon\left(\mathcal L_{\nu^\dagger}-\mathcal A^\dagger-\epsilon^{-1}\Pi_{\rm nor}^\dagger\right)\omega^\dagger_{T-t}(u^t)dl^t.
\end{eqnarray*} 
If $\omega_t$ is a solution of (\ref{solinit}) then the second term  on the right-hand side drops out. Moreover, by  Proposition \ref{liftboundcond}  the same happens to the term involving $\epsilon^{-1}$. 
Sending $\epsilon\to 0$ we end up with 
	\begin{eqnarray*}
		d\mathcal M^t\omega^\dagger_{T-t}(u^t) & = & 
		\left[\mathcal M^t\mathcal L_{H}\omega^\dagger_{T-t}(u^t),db^t\right]\\
		& & \quad +\mathcal M^t\Pi^\dagger_{\rm tan}\left(\mathcal L_{\nu^\dagger}-\mathcal A^\dagger\right)\omega^\dagger_{T-t}(u^t)dl^t,
	\end{eqnarray*} 	
where the insertion of  $\Pi^\dagger_{\rm tan}$ in the last term is legitimate due to the last assertion in Theorem \ref{converg}.  
By Proposition \ref{liftboundcond} this actually reduces to   	
\[
d\mathcal M^t\omega^\dagger_{T-t}(u^t)  =  \left[\mathcal M^t\mathcal L_{H}\omega^\dagger_{T-t}(u^t),db^t\right],
\]
which shows that $\mathcal M^t\omega^\dagger_{T-t}(u^t)$ is a martingale. Thus,   (\ref{fkac2}) follows by equating the    expectations at $t=0$ and $t=T$. 
Finally, (\ref{controlsol}) follows from (\ref{unifcont2}).
\end{proof}

The estimate (\ref{controlsol}) has many interesting consequences. We illustrate its usefulness by mentioning a semigroup domination result which can be proved as in \cite[Theorem 3A]{ER2}; see also \cite{DL, E, H2, H3} for similar results.

\begin{theorem}
	\label{semidom}
	Let $(N,\partial N, h)$ be as above and assume that $\rho_{(q)}\geq 0$ for some $1\leq q\leq n-1$.
	Then 
	there holds
	\[
	\left|e^{-\frac{1}{2} t\Delta^{\rm abs}_q}(x,y)\right|\leq
	\left(
	\begin{array}{c}
	n \\
	q
	\end{array}
	\right)
	e^{-\frac{1}{2}\underline r_{(q)}t}
	e^{-\frac{1}{2} t\Delta^{\rm abs}_0}(x,y), \quad x,y\in N, t>0,
	\]
	where  $\underline{r}_{(q)}=\inf_{x\in N} r_{(q)}(x)$. 
	In particular, if $\lambda^{\rm abs}_0(h)+\underline r_{(q)}\geq 0$ and $r_{(q)}>\underline r_{(q)}$ somewhere then   
	$N$ carries no non-trivial absolute $L^2$ harmonic $q$-form. 
\end{theorem}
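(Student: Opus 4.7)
The plan is to derive the pointwise heat kernel estimate directly from the Feynman-Kac bound (\ref{controlsol}) and then to rule out $L^2$ harmonic forms by a scalar spectral argument. The first step is to exploit the signs in (\ref{controlsol}): since $\rho_{(q)}\geq 0$ and $l^t$ is non-decreasing, the boundary exponential is bounded by $1$, while $r_{(q)}\geq \underline r_{(q)}$ controls the interior one. This gives
\[
\bigl|e^{-\frac{1}{2}t\Delta_q^{\rm abs}}\omega_0(x)\bigr|\leq e^{-\frac{1}{2}\underline r_{(q)}t}\,\mathbb E_x|\omega_0(x^t)| = e^{-\frac{1}{2}\underline r_{(q)}t}\,(e^{-\frac{1}{2}t\Delta_0^{\rm abs}}|\omega_0|)(x),
\]
where the identity uses that the scalar Neumann heat semigroup is represented by reflecting Brownian motion as in Subsection \ref{appA1}. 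To upgrade this functional inequality to a kernel inequality, I would fix $y\in N$, pick an orthonormal basis $\{e^I\}$ of $\wedge^qT_y^*N$ indexed by the $\binom{n}{q}$ increasing $q$-subsets $I$, and approximate each $e^I$ by absolute $q$-forms $\omega_\varepsilon^I$ concentrated near $y$ with $\|\omega_\varepsilon^I\|_{L^1}\to 1$. Passing to the limit yields $|e^{-\frac{1}{2}t\Delta_q^{\rm abs}}(x,y)\,e^I|\leq e^{-\frac{1}{2}\underline r_{(q)}t}\,e^{-\frac{1}{2}t\Delta_0^{\rm abs}}(x,y)$ for each $I$, and the triangle inequality $|A|\leq \sum_I|Ae^I|$ over the $\binom{n}{q}$ basis elements produces the asserted bound.

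For the vanishing assertion, I argue by contradiction: let $\omega\neq 0$ be an absolute $L^2$ harmonic $q$-form, so $\omega=e^{-\frac{1}{2}t\Delta_q^{\rm abs}}\omega$ for every $t$. Rather than applying the weaker estimate from the first part, I would use (\ref{controlsol}) together with the Feynman-Kac representation of the Schr\"odinger semigroup to write
\[
|\omega|(x)\leq \mathbb E_x\bigl(|\omega|(x^t)\,e^{-\frac{1}{2}\int_0^t r_{(q)}(x^s)ds}\bigr) = (e^{-\frac{1}{2}tL_q}|\omega|)(x),
\]
with $L_q:=\Delta_0^{\rm abs}+r_{(q)}$ acting on Neumann scalars. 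Pairing against $|\omega|\geq 0$ in $L^2$ gives $\||\omega|\|_2^2 \leq \langle |\omega|,e^{-\frac{1}{2}tL_q}|\omega|\rangle$ for every $t>0$. A Rayleigh quotient estimate bounds $\inf {\rm Spec}(L_q)\geq \lambda_0^{\rm abs}(h)+\underline r_{(q)}\geq 0$, so by the spectral theorem the right-hand side is at most $\||\omega|\|_2^2$; equality must therefore hold for every $t$. This forces the spectral measure of $|\omega|$ with respect to $L_q$ to be concentrated at $0$, so that $|\omega|$ lies in $\ker L_q$.

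The main obstacle is then to exclude this kernel element. Any $u\in L^2(N)$ with $L_q u=0$ would satisfy
\[
\int_N|\nabla u|^2 \, dN + \int_N r_{(q)}\,u^2\,dN = 0,
\]
and the hypothesis that $r_{(q)}>\underline r_{(q)}$ on a set of positive measure turns $\int r_{(q)}u^2\geq \underline r_{(q)}\|u\|_2^2$ into a strict inequality, since the harmonic form is not identically zero on that set; combined with $\int|\nabla u|^2\geq \lambda_0^{\rm abs}(h)\|u\|_2^2$ via the variational characterization, the sum strictly exceeds $(\lambda_0^{\rm abs}(h)+\underline r_{(q)})\|u\|_2^2\geq 0$, a contradiction. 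The non-smoothness of $u=|\omega|$ at its zero set is handled by Kato's inequality, which preserves the Neumann boundary condition inherited from $\omega_{\rm nor}=0$. A secondary technical point is justifying the mollifier limit in the first step, which rests on standard bounded-geometry continuity properties of the heat kernels developed in \cite{S2,S3}.
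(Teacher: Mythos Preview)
The paper does not actually prove this theorem; it only states it and points to \cite[Theorem~3A]{ER2} for the method. Your argument is exactly the route one extracts from that reference, adapted to the reflecting setting: drop the boundary exponential in (\ref{controlsol}) using $\rho_{(q)}\geq 0$, pull out $e^{-\frac{1}{2}\underline r_{(q)}t}$ from the interior one, identify $\mathbb E_x|\omega_0(x^t)|$ with the Neumann heat semigroup, and pass to kernels by testing against bump forms at interior points (the $\binom{n}{q}$ then comes from summing over a fibre basis, as you do). The vanishing clause via the Schr\"odinger comparison $|\omega|\leq e^{-\frac{1}{2}tL_q}|\omega|$ with $L_q=\Delta_0^{\rm abs}+r_{(q)}$ is likewise the standard mechanism.

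One step in your contradiction deserves to be named rather than asserted. When you say that $u=|\omega|$ is ``not identically zero'' on the open set $\{r_{(q)}>\underline r_{(q)}\}$, you are tacitly invoking unique continuation for the elliptic system $\Delta_q\omega=0$ (the Hodge Laplacian has scalar principal symbol, so Aronszajn--Krzywicki--Szarski applies). Without this, nothing prevents the harmonic form from vanishing precisely where $r_{(q)}$ is large, and the strict inequality collapses. An equivalent way to close the argument that stays on the scalar side: from $0=\int|\nabla u|^2+\underline r_{(q)}\|u\|_2^2+\int(r_{(q)}-\underline r_{(q)})u^2$ each nonnegative summand must vanish, so $u$ is a ground state of $\Delta_0^{\rm abs}$ that vanishes on a nonempty open set, and scalar unique continuation (or constancy when $\lambda_0^{\rm abs}(h)=0$) forces $u\equiv 0$. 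Either way the missing word is ``unique continuation''; once said, your proof is complete.
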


\subsection{A digression: the Feynman-Kac formula for spinors}\label{appA3}

Let  $N$ be a ${\rm spin}^c$  $\partial$-manifold \cite{Fr}. As usual we assume that $(N,\partial N,h)$ has bounded geometry. Let $\mathbb SN=P_{{\rm Spin}^c_n}(N)\times_{\zeta} V$ be the ${\rm spin}^c$ bundle of $N$, where $\zeta$ is the complex spin representation. Recall that $P_{{\rm Spin}^c_n}(N)$ is a ${\rm Spin}^c$ principal bundle  double covering $P_{{\rm SO}_n}(N)\times P_{{\rm U}_1}(N)$, where $P_{{\rm U}_1}(N)$ is the ${\rm U}_1$ principal bundle associated to the   
auxiliary complex line bundle $\mathcal F$. After fixing a unitary connection $C$ on $\mathcal F$, the Levi-Civita connection  on $TN$ induces a  metric  connection on $\mathbb SN$, still denoted $\nabla$. The corresponding Dirac operator $D:\Gamma(\mathbb SN)\to\Gamma(\mathbb SN)$ is locally given by
$$
D\psi=\sum_{i=1}^n \gamma(e_i)\nabla_{e_i}\psi,\quad \psi\in\Gamma(\mathbb SN),
$$
where $\{e_i\}_{i=1}^n$ is a local orthonormal frame and $\gamma:{\rm Cl}(TN)\to{\rm End}(\mathbb SN)$ is the Clifford product.
The  Dirac Laplacian operator is
\begin{equation}\label{lich}
D^2\psi=\Delta_B\psi+\mathfrak R\psi, 
\end{equation}
where 
\[
\mathfrak R\psi=\frac{R}{4}\psi+\frac{1}{2}\gamma(i\Omega).
\]
Here, $R$ is the scalar curvature of $h$ and $i\Omega$ is the curvature $2$-form of $C$.

The ${{\rm spin}^c}$ bundle $\mathbb SN|_{\partial N}$, obtained by restricting  $\mathbb SN$ to $\partial N$,  becomes a Dirac bundle if its Clifford product is
$$
\gamma^{\intercal}(X)\psi=\gamma(X)\gamma(\nu) \psi,\quad X\in \Gamma(T\partial N), \quad \psi\in \Gamma(\mathbb SN|_{\partial N}),
$$
and its connection is
\begin{equation}\label{conn0}
\nabla^{\intercal}_X\psi  =  \nabla_X\psi-\frac{1}{2}\gamma^{\intercal}(AX)\psi,
\end{equation}
where as usual $A=-\nabla\nu$ is the second fundamental form of $\partial N$; see \cite{NR} and the references therein.
The corresponding Dirac operator $D^{\intercal}:\Gamma(\mathbb SN|_{\partial N})\to\Gamma(\mathbb SN|_{\partial N})$ is
$$
D^{\intercal}\psi=\sum_{j=1}^{n-1}\gamma^{\intercal}(e_j)\nabla^{\intercal}_{e_j}\psi,
$$
where the frame has been adapted so that $e_n=\nu$.
Imposing that $Ae_j=\rho_je_j$, where $\rho_j$ are the principal curvatures of $\partial N$, a direct computation shows that 
$$
D^{\intercal}\psi=\frac{K}{2}\psi+
\sum_{j=1}^{n-1}\gamma(e_j)\nabla_{e_j}\psi,
$$
where $K={\rm tr}\,A$ is the mean curvature. It follows that this tangential Dirac operator enters into the boundary decomposition of $D$, namely, 
\begin{equation}\label{dirt1}
-\gamma(\nu)D=\nabla_\nu+D^{\intercal}-\frac{K}{2}, 
\end{equation}
which by its turn appears in  Green's formula
for the Dirac Laplacian
\begin{equation}\label{dirt2}
\int_N\langle D^2\psi,\xi\rangle dN=\int_N\langle D\psi,D\xi\rangle dN
-\int_{\partial N}\langle \gamma(\nu)D\psi,\xi\rangle d\partial N,
\end{equation}
where $\psi$ and $\xi$ are compactly supported. Also, since $\gamma^\intercal(e_j)\gamma(\nu)=-\gamma(\nu)\gamma^\intercal(e_j)$ and $\nabla_{e_j}^\intercal\gamma(\nu)=\gamma(\nu)\nabla_{e_j}^\intercal$, we see that 
\begin{equation}\label{dirt3}
D^\intercal\gamma(\nu)=-\gamma(\nu)D^\intercal.
\end{equation}

Now fix a nontrivial orthogonal  projection $\Pi\in\Gamma({\rm End}(\mathbb SN|_{\partial N}))$ and set $\Pi_+=\Pi$ and $\Pi_-={I}-\Pi$. It is clear from (\ref{dirt1}) and (\ref{dirt2}) that any of the  boundary conditions
\begin{equation}
\label{bdconddir}
\Pi_\pm\psi=0, \quad \Pi_\mp\left(\nabla_\nu+D^\intercal-\frac{K}{2}\right)\psi=0,
\end{equation}
turns the Dirac Laplacian $D^2$ into a formally self-adjoint operator. 
The 
next definition isolates a notion of compatibility between  the tangential Dirac operator and the projections which will allow us to get rid of the middle term in the second condition above. 

\begin{definition}
	\label{compcond} We say that the tangential Dirac operator $D^\intercal$ intertwines the projections if there holds 
	$\Pi_\pm D^\intercal=D^\intercal\Pi_{\mp}$.
\end{definition}

\begin{remark}
	\label{intetw}
{\rm 
If $D^\intercal$ intertwines the projections then  $\Pi_\pm D^\intercal\Pi_\pm=D^\intercal\Pi_{\mp}\Pi_\pm=0$. Equivalently, $\langle D^\intercal\Pi_{\pm}\psi,\Pi_\pm\xi \rangle=0$ for any spinors $\psi$ and $\xi$.}  
\end{remark}

\begin{proposition}\label{liftdir}
	Under the conditions above assume further that $D^\intercal$ intertwines the projections as in Definition \ref{compcond}. Then a spinor $\psi\in\Gamma(\mathbb SN|_{\partial N})$ satisfies the boundary conditions (\ref{bdconddir}) if and only if its lift $\psi^\dagger:P_{{\rm Spin}^c_n}(N)\to V$ satisfies
	\begin{equation}\label{bdconddir2}
	\Pi_\pm ^\dagger\psi^\dagger=0\quad {\rm and}\quad \Pi^\dagger_\mp\left(\mathcal L_{\nu^\dagger}-\frac{K^\dagger}{2}\right)\psi^\dagger=0\quad {\rm on}\quad \partial P_{{\rm Spin}^c_n}(N).
	\end{equation}
	\end{proposition}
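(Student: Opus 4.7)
The plan is to mimic the structure of Proposition \ref{liftboundcond}, but with a decisive simplification provided by the intertwining hypothesis that allows the tangential Dirac operator to disappear entirely from the lifted boundary condition. The first step is to observe that $\Pi_\pm$ is a section of ${\rm End}(\mathbb S N|_{\partial N})$, hence a zero-order (algebraic) operator. The identification of sections with their equivariant lifts therefore yields $(\Pi_\pm\psi)^\dagger=\Pi_\pm^\dagger\psi^\dagger$ tautologically, so the first condition of (\ref{bdconddir}) is equivalent to the first condition of (\ref{bdconddir2}).

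The core of the argument is the second step: invoking the intertwining to remove $D^\intercal$ from the Robin-type condition. From Definition \ref{compcond} (applied with opposite signs) one has $\Pi_\mp D^\intercal=D^\intercal\Pi_\pm$, and therefore whenever $\Pi_\pm\psi=0$,
\[
\Pi_\mp D^\intercal\psi = D^\intercal\Pi_\pm\psi=0.
\]
Consequently, under the first boundary condition the contribution of $D^\intercal$ in the second condition of (\ref{bdconddir}) drops out, and that condition collapses to $\Pi_\mp(\nabla_\nu-K/2)\psi=0$.

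The final step is to lift this reduced condition to $\partial P_{{\rm Spin}^c_n}(N)$ via the standard Eells-Elworthy-Malliavin dictionary: for any vector field $X$ and section $\psi$, the equivariant lift of $\nabla_X\psi$ coincides with $\mathcal L_{X^\dagger}\psi^\dagger$, where $X^\dagger$ is the horizontal lift of $X$ with respect to the connection induced on $P_{{\rm Spin}^c_n}(N)$ by the Levi-Civita and unitary connections on $TN$ and $\mathcal F$. Applying this with $X=\nu$ along the boundary, and using that $K$ is scalar (so $(K\psi)^\dagger=K^\dagger\psi^\dagger$) while $\Pi_\mp$ lifts equivariantly to $\Pi_\mp^\dagger$, produces precisely the second condition in (\ref{bdconddir2}).

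The main obstacle I expect is not computational but rather making sure that the intertwining is used \emph{symmetrically}, so that both implications of the ``if and only if'' go through: the forward direction needs $\Pi_\mp D^\intercal\psi=0$ under $\Pi_\pm\psi=0$, while the converse direction requires the analogous cancellation upstairs, which follows immediately from the lifted identity $\Pi_\mp^\dagger D^{\intercal,\dagger}=D^{\intercal,\dagger}\Pi_\pm^\dagger$. Everything else reduces to the same functorial properties of the lift already employed in the proof of Proposition \ref{liftboundcond}, so no new technology is required beyond the observation contained in Definition \ref{compcond}.
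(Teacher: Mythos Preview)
Your proposal is correct and follows essentially the same approach as the paper, which simply records the argument as ``Obvious in view of (\ref{bdconddir}) and Remark \ref{intetw}''; you have merely unpacked this by invoking the intertwining relation $\Pi_\mp D^\intercal=D^\intercal\Pi_\pm$ directly (the paper phrases it via the equivalent consequence $\Pi_\mp D^\intercal\Pi_\mp=0$ from Remark \ref{intetw}) and then applying the standard lift dictionary as in Proposition \ref{liftboundcond}.
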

	
	\begin{proof}
	Obvious in view of (\ref{bdconddir}) and Remark \ref{intetw}.
	\end{proof}

We can now proceed exactly like in the previous subsection. 
We assume  that (\ref{bdconddir2}) gives rise to a self-adjoint elliptic realization of $D^2$  and we denote by $e^{-\frac{1}{2}tD^2}$ the corresponding heat semigroup \cite{Gru}.
We lift everything in sight to $P_{{\rm Spin}^c_n}(N)$ and consider there the functional $\mathcal M_\epsilon^t$ defined by 
\[
d\mathcal M^t_\epsilon+\mathcal M^t_\epsilon\left(\frac{1}{2}\mathfrak R^\dagger(u^t) dt+\left(\frac{1}{2}K^\dagger(u^t)+
\epsilon^{-1}\Pi^\dagger_+(u^t)\right)dl^t\right)=0,\quad \mathcal M^0_\epsilon=I.
\]
The limiting functional $\mathcal M^t$, whose existence is guaranteed by the analogue of Theorem \ref{converg}, appears in the corresponding Feynman-Kac formula. 

\begin{theorem}
	\label{fkacdir}
	Let $\psi_0\in\Gamma(\mathbb SN)$ be a spinor satisfying any of the boundary conditions (\ref{bdconddir}), where we assume that $D^\intercal$ intertwines the projections as in Definition \ref{compcond}. If $\psi_t=e^{-\frac{1}{2}tD^2}\psi_0$ is the solution to 
	\begin{equation}\label{solinitdir}
	\frac{\partial \psi_t}{\partial t}+\frac{1}{2}D^2\psi_t=0,\quad \lim_{t\to 0}\psi_t=\psi_0, \quad \Pi_\pm\psi_t=0,\quad \Pi_\mp\left(\nabla_\nu-\frac{K}{2}\right)\psi_t=0, 
	\end{equation}
	then the following Feynman-Kac formula holds: 
	\begin{equation}
	\label{fkac21}
	\psi^\dagger_t(u^0)=\mathbb E_{u^0}\left(\mathcal M_t\psi_{0}^\dagger(u^t)\right),
	\end{equation} 
	where $u^t$ is the horizontal reflecting Brownian motion on $P_{{\rm Spin}^c_n}(N)$ starting at $u^0$. As a consequence, 
	\begin{equation}
	\label{controlsol2}
	|\psi_t(x^0)|\leq\mathbb E_ {x^0}\left(|\psi_0({x^t})|\exp\left(-\frac{1}{2}\int_0^t {\mathfrak r}(x^s)ds-\frac{1}{2}\int_0^t K(x^s)dl^s\right)\right),
	\end{equation}
	where ${\mathfrak r}(x)=\inf_{|\psi|=1}\langle {\mathfrak R}(x)\psi,\psi\rangle$.
\end{theorem}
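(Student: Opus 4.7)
The plan is to mirror the proof of Theorem \ref{fkac} step by step, with the Bochner formula replaced by the Lichnerowicz decomposition (\ref{lich}) and the algebraic ingredients $(R_q, \mathcal A_q, \Pi_{\rm tan}, \Pi_{\rm nor})$ replaced by $(\mathfrak R, K/2, \Pi_\mp, \Pi_\pm)$ respectively. First I would apply It\^o's formula to the lifted process $t \mapsto \mathcal M^t_\epsilon \psi^\dagger_{T-t}(u^t)$ on $P_{{\rm Spin}^c_n}(N)$. Using (\ref{sstoc}), the definition of $\mathcal M^t_\epsilon$, and the Lichnerowicz identity (\ref{lich}), I would obtain
\begin{eqnarray*}
d\mathcal M^t_\epsilon \psi^\dagger_{T-t}(u^t) & = & [\mathcal M^t_\epsilon \mathcal L_H\psi^\dagger_{T-t}(u^t), db^t] - \mathcal M^t_\epsilon L^\dagger\psi^\dagger_{T-t}(u^t)\,dt \\
& & \quad + \mathcal M^t_\epsilon\left(\mathcal L_{\nu^\dagger} - \tfrac{K^\dagger}{2} - \epsilon^{-1}\Pi^\dagger_+\right)\psi^\dagger_{T-t}(u^t)\,dl^t,
\end{eqnarray*}
where $L^\dagger = \partial_t + \tfrac{1}{2}(\Delta_B^\dagger + \mathfrak R^\dagger)$ is the horizontal lift of the heat operator attached to $D^2$.

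Next, assuming $\psi_t$ solves (\ref{solinitdir}), the $dt$ term vanishes. Invoking Proposition \ref{liftdir}, the first boundary condition $\Pi_\pm\psi_t=0$ lifts to $\Pi^\dagger_\pm\psi^\dagger_{T-t} = 0$ along $\partial P_{{\rm Spin}^c_n}(N)$, killing the $\epsilon^{-1}$ piece of the $dl^t$ term. Sending $\epsilon \to 0$ and using the spinorial analogue of Theorem \ref{converg} (whose existence and the property $\mathcal M^t\Pi^\dagger_+(u)=0$ on $\partial P_{{\rm Spin}^c_n}(N)$ both follow, as the author notes for the form case, by minor notational adaptations of the proof in \cite{H1}), I get a limiting multiplicative functional $\mathcal M^t$ satisfying
\begin{eqnarray*}
d\mathcal M^t\psi^\dagger_{T-t}(u^t) & = & [\mathcal M^t \mathcal L_H\psi^\dagger_{T-t}(u^t), db^t] \\
& & \quad + \mathcal M^t\Pi^\dagger_-\left(\mathcal L_{\nu^\dagger} - \tfrac{K^\dagger}{2}\right)\psi^\dagger_{T-t}(u^t)\,dl^t,
\end{eqnarray*}
where the insertion of $\Pi^\dagger_-$ is legitimate precisely because $\mathcal M^t\Pi^\dagger_+ = 0$ on the boundary. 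The second half of Proposition \ref{liftdir} then identifies this residual $dl^t$ coefficient with the lifted form of $\Pi_\mp(\nabla_\nu - K/2)\psi_t$, which vanishes by (\ref{solinitdir}). Hence $\mathcal M^t\psi^\dagger_{T-t}(u^t)$ is a true martingale, and equating expectations at $t=0$ and $t=T$ gives (\ref{fkac21}). Finally, (\ref{controlsol2}) follows from a spinorial version of Proposition \ref{unifcont}: since $\mathfrak r$ is by construction the least eigenvalue of $\mathfrak R$, a Gronwall comparison applied to the matrix ODE defining $\mathcal M^t_\epsilon$ yields $|\mathcal M^t_\epsilon| \le \exp(-\tfrac{1}{2}\int_0^t\mathfrak r(x^s)ds - \tfrac{1}{2}\int_0^t K(x^s)dl^s)$ (as soon as $\epsilon^{-1}\ge 0$, the $\Pi_+$ term is positive semidefinite and harmless in the estimate), and this bound survives the limit $\epsilon\to 0$.

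The main obstacle, as in the form case, is justifying the $L^2$-convergence $\mathcal M^t_\epsilon \to \mathcal M^t$ and the crucial boundary vanishing $\mathcal M^t\Pi^\dagger_+ = 0$; everything else is It\^o's calculus together with the intertwining hypothesis of Definition \ref{compcond}, which is exactly what makes the $D^\intercal$ contribution from (\ref{bdconddir}) invisible in (\ref{solinitdir}) (as noted in Remark \ref{intetw}) and ensures that no tangential derivatives survive in the lifted boundary operator. Since the author has already indicated that the convergence proof in \cite{H1} for $q=1$ extends with only notational changes to the general form case, and the present setup is algebraically of the same shape (a zero-order endomorphism plus a single projection in the Neumann-type boundary term), the same technology applies mutatis mutandis here.
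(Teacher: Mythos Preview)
Your proposal is correct and follows exactly the approach of the paper, whose proof is simply ``The same as in Theorem \ref{fkac}.'' You have faithfully unpacked what that means in the spinor setting: apply It\^o to $\mathcal M^t_\epsilon\psi^\dagger_{T-t}(u^t)$, use (\ref{solinitdir}) and Proposition \ref{liftdir} to kill the $dt$ and $\epsilon^{-1}$ contributions, pass to the limit via the analogue of Theorem \ref{converg} to insert $\Pi^\dagger_\mp$ and kill the remaining $dl^t$ term, then read off the martingale property and the Gronwall bound (your observation that $\epsilon^{-1}\Pi^\dagger_+\geq 0$ suffices here, since $K^\dagger/2$ is scalar, is the correct replacement for the condition $\epsilon^{-1}\geq\underline\rho_{(q)}$ in Proposition \ref{unifcont}).
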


\begin{proof}
	The same as in Theorem \ref{fkac}.
\end{proof}

It is worthwhile to state the analogue of Theorem \ref{semidom} for spinors.

\begin{theorem}
	\label{semidomsp}
	Let $(N,h)$ be a ${{\rm spin}^c}$ $\partial$-manifold as above and  assume that $K\geq 0$ along $\partial N$. Let $e^{-\frac{1}{2}tD ^2}$ be the heat semigroup of the Dirac Laplacian acting on spinors subject to boundary conditions as in Theorem \ref{fkacdir}.  Then there holds 		\[
	\left|e^{-\frac{1}{2} tD^2}(x,y)\right|\leq
	2^{\left[\frac{n}{2}\right]+1}
	e^{-\frac{1}{2}\underline {\mathfrak r}t}
	e^{-\frac{1}{2} t\Delta^{\rm abs}_0}(x,y), \quad x,y\in N, t>0,
	\]
	where
	$\underline{\mathfrak r}=\inf_{x\in N}{\mathfrak r}(x)$. 
	In particular, if $\lambda^{\rm abs}_0(h)+\underline{\mathfrak r}\geq 0$ and ${\mathfrak r}>\underline{\mathfrak r}$ somewhere then   
	$N$ carries no non-trivial $L^2$ harmonic spinor satisfying the given boundary conditions. 
\end{theorem}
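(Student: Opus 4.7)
The plan is to mimic the argument behind Theorem \ref{semidom} essentially verbatim, the only substitutions being the replacement of the form Weitzenb\"ock term $R_q$ by the spinor curvature $\mathfrak R$ and of the eigenvalue sum $\sum_j\rho_{i_j}$ of $\mathcal A_q$ by the trace $K$ of the second fundamental form. The crucial input is already in hand, namely the pointwise Feynman--Kac estimate (\ref{controlsol2}) from Theorem \ref{fkacdir}.

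First, for any admissible $\psi_0 \in L^2 \cap L^\infty(\mathbb SN)$ (meeting the boundary conditions of Theorem \ref{fkacdir}), I would apply (\ref{controlsol2}) and invoke the hypotheses $K \geq 0$ along $\partial N$ and $\mathfrak r \geq \underline{\mathfrak r}$ on $N$ to drop the boundary-local-time integral and replace $\mathfrak r(x^s)$ by the constant $\underline{\mathfrak r}$. This yields
\[
|\psi_t(x^0)| \leq e^{-\frac{1}{2}\underline{\mathfrak r} t}\,\mathbb E_{x^0}\bigl(|\psi_0|(x^t)\bigr).
\]
The expectation on the right is exactly $\bigl(e^{-\frac{1}{2}t\Delta_0^{\rm abs}}|\psi_0|\bigr)(x^0)$, since reflecting Brownian motion on $N$ is the diffusion whose transition semigroup is generated by $\tfrac{1}{2}\Delta_0^{\rm abs}$ (Neumann Laplacian on functions), a fact used already in Section \ref{appA1}.

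Second, I would pass from this pointwise bound on sections to the claimed bound on the operator norm of the heat kernel by the standard delta-function approximation. Fix $x,y \in N$, take an orthonormal basis $\{v_j\}_{j=1}^{2^{[n/2]+1}}$ of $\mathbb S_yN$ viewed as a real Euclidean space, and approximate each $v_j\delta_y$ by a sequence of smooth compactly supported spinors $\psi_0^{(k,j)}$ concentrated near $y$ and lying in the domain of the chosen self-adjoint realization of $D^2$. Applying the pointwise estimate above to each $\psi_0^{(k,j)}$, sending $k\to\infty$, and summing over $j$, one recovers
\[
\bigl|e^{-\frac{1}{2}tD^2}(x,y)\bigr| \leq 2^{[n/2]+1}\,e^{-\frac{1}{2}\underline{\mathfrak r}t}\,e^{-\frac{1}{2}t\Delta_0^{\rm abs}}(x,y),
\]
the combinatorial factor being the real rank of $\mathbb SN$, in exact analogy with the $\binom{n}{q}$ appearing in Theorem \ref{semidom}.

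For the vanishing assertion, suppose $\psi$ is a nontrivial $L^2$ harmonic spinor satisfying the given boundary conditions, so $e^{-\frac{1}{2}tD^2}\psi = \psi$ for all $t\geq 0$. Re-running (\ref{controlsol2}) with $\psi_0=\psi$ (keeping the full $\mathfrak r(x^s)$ rather than $\underline{\mathfrak r}$), squaring, integrating in $x^0$, and invoking the Cauchy--Schwarz inequality together with the symmetry and sub-Markovianity of $e^{-\frac{1}{2}t\Delta_0^{\rm abs}}$ plus the spectral gap hypothesis $\lambda_0^{\rm abs}(h)+\underline{\mathfrak r}\geq 0$, I would obtain $\|\psi\|_2^2 \leq e^{-t(\lambda_0^{\rm abs}(h)+\underline{\mathfrak r})}\|\psi\|_2^2$. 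The hypothesis that $\mathfrak r>\underline{\mathfrak r}$ on a set of positive measure then upgrades the pointwise exponential weight to a strict inequality for any $t>0$, forcing $\psi\equiv 0$, which is the desired contradiction. The main obstacle I anticipate is the second step: the delta-function approximation requires care because the domain of $D^2$ is defined by nonlocal spinorial boundary conditions, so one must verify that the approximating sequence can be chosen inside this domain while still concentrating at an arbitrary interior point $y$, a point where a localization lemma in the spirit of \cite[Theorem 3A]{ER2} is needed.
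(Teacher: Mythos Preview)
Your proposal is correct and follows precisely the route the paper indicates: the paper gives no explicit proof of Theorem \ref{semidomsp} but simply states it as the spinor analogue of Theorem \ref{semidom}, which in turn is said to be provable ``as in \cite[Theorem 3A]{ER2}'', and your argument is exactly an instantiation of that method using the Feynman--Kac estimate (\ref{controlsol2}). Your only stated worry is unfounded: the boundary conditions (\ref{bdconddir}) are \emph{local} (the paper says so explicitly just before Examples \ref{chiral} and \ref{mit}), and in any case for interior $y$ the approximating spinors can be taken compactly supported away from $\partial N$, so they lie trivially in the domain of the chosen self-adjoint realization of $D^2$.
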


We now discuss a couple of examples of local boundary conditions for spinors to which Theorem \ref{fkacdir} applies. 

\begin{example}
	\label{chiral}{\rm (Chirality boundary condition)
A chilarity  operator on a ${\rm spin}^c$ $\partial $-manifold
$(N,\partial N)$ is an orthogonal  and parallel involution $Q\in\Gamma({\rm End}(\mathbb SN))$ which anti-com\-mutes with Clifford product with any tangent vector. Examples include Clifford product with the complex volume element in an even dimension spin manifold and  with the time-like unit normal to an immersed space-like hypersurface in a Lorentzian spin manifold. It is easy to check that $D^\intercal Q=QD^\intercal$ and $D^\intercal\gamma(\nu)=-\gamma(\nu)D^\intercal$. Given any such $Q$ define the boundary chilarity operator $\widehat Q=\gamma(\nu)Q\in\Gamma({\rm End}(\mathbb SN)|_{\partial N})$, which  still is an orthogonal and parallel involution with associated projections given by 
\begin{equation}\label{projqui}
\Pi_\pm=\frac{1}{2}\left({I}\mp\widehat Q\right).
\end{equation}
Since $D^\intercal \widehat Q=D^\intercal\gamma(\nu)Q=-\gamma(\nu)QD^\intercal=-\widehat QD^\intercal$, we conclude that  $D^\intercal\Pi_{\pm}=\Pi_{\mp}D^\intercal$, that is, $D^\intercal$ intertwines the projections. Thus, Theorem \ref{fkacdir} applies to the self-adjoint elliptic realization of $D^2$ under this boundary condition.}
\end{example} 

\begin{example}
	\label{mit}{\rm (MIT bag boundary condition) This time we choose $\widehat Q=i\gamma(\nu)$, an involution which clearly satisfies $D^\intercal\widehat Q=-\widehat QD^\intercal$. Thus, $D^\intercal$ intertwines the projections exactly like in the previous example and Theorem \ref{fkacdir} again applies to the self-adjoint elliptic realization of $D^2$ under this boundary condition.}
\end{example}

\begin{remark}
	\label{clarif} {\rm For the sake of comparison, it is instructive to examine how absolute and relative boundary conditions for differential forms fit into the framework developed in this subsection. In particular, this helps to clarify the role played by Proposition \ref{liftboundcond} and its analogue for relative forms. Recall that $\wedge^\bullet T^*N$ has the  structure of a Clifford module if we define the Clifford product by tangent vectors as $\gamma(v)=v\wedge-v\righthalfcup$. The corresponding Dirac operator is $D=d+d^\star$, so that $D^2=\Delta$, the Hodge Laplacian. If 
	$\omega$ is a $q$-form then we know that along $\partial N$,
	\[
	\omega=\omega_{\rm tan}+\nu\wedge\omega_{\rm nor}=\Pi_{\rm tan}\omega+\Pi_{\rm nor}\omega.
	\]	
	Instead of (\ref{conn0}) we now have
	\[
	\nabla^\intercal_X=\nabla^{\partial N}_X+\nu\wedge A(X)\righthalfcup.
	\]
	A direct computation then shows that, with respect to the splitting above, the boundary decomposition of $D$ is 
	\[
	-\gamma(\nu)D\left(
	\begin{array}{c}
	\omega_{\rm tan}\\
	\omega_{\rm nor}
	\end{array}
	\right)=
	\left(
	\begin{array}{c}
	\nabla_\nu\omega_{\rm tan}\\
	\nabla_\nu\omega_{\rm nor}
	\end{array}
	\right)
	-\left(
	\begin{array}{cc}
	\mathcal S_q^{\rm tan} & D_{\partial N}\\
	D_{\partial N}& \mathcal S_{q-1}^{\rm nor}
	\end{array}
	\right)
	\left(
	\begin{array}{c}
	\omega_{\rm tan}\\
	\omega_{\rm nor}
	\end{array}
	\right)
	,
	\] 
	where $D_{\partial N}=d_{\partial N}+d^*_{\partial N}$ and in terms of a principal frame,
	\[
	\mathcal S_q^{\rm tan,\,nor}=\sum_j\rho_j\Pi_{e_j}^{\rm tan,\,nor},
	\]
	with $\Pi_{v}^{\rm tan}=v\wedge v\righthalfcup$
	and $\Pi_{v}^{\rm nor}=v\righthalfcup v\wedge$. 
	If $\omega_{\rm nor}=0$ then $\mathcal S_q^{\rm tan}\omega=\mathcal S_q\omega$ and the boundary integral in Green's formula for the Hodge Laplacian is 
	\[
	\int_{\partial N}\left(\langle\nabla_\nu\omega_{\rm tan},\omega_{\rm tan}\rangle-\langle\mathcal S_q\omega_{\rm tan},\omega_{\rm tan}\rangle-\langle D_{\partial N}\omega_{\rm tan},\omega_{\rm tan}\rangle\right)d\partial N.
	\]
	However, the last term vanishes because 
    the forms involved in the inner product have different parities. Thus, the right boundary conditions are
	\begin{equation}\label{absolc}
	\Pi_{\rm nor}\omega =0, \quad \Pi_{\rm tan}\left(\nabla_\nu-\mathcal S_q\right)\omega=0.
	\end{equation}
	Proposition \ref{liftboundcond} then shows that (\ref{absolc}) defines absolute boundary conditions for the Hodge Laplacian. Similarly, if $\omega_{\rm tan}=0$ then $\omega=\nu\wedge \omega_{\rm nor}$ and  $\mathcal S_{q-1}^{\rm nor}\omega_{\rm nor}=\star\mathcal S_{n-q}\star\omega_{\rm nor}$, where here $\star$ is the Hodge star operator of $\partial N$. This time 
	the boundary integral is 
	\[
	\int_{\partial N}\left(\langle\nabla_\nu\omega_{\rm nor},\omega_{\rm nor}\rangle-\langle\star\mathcal S_{n-q}\star\omega_{\rm nor},\omega_{\rm nor}\rangle-\langle D_{\partial N}\omega_{\rm nor},\omega_{\rm nor}\rangle\right)d\partial N.
	\]
	Again, the last term drops out and the correct 
	boundary conditions are
	\begin{equation}\label{relbx}
	\Pi_{\rm tan}\omega =0, \quad \Pi_{\rm nor}\left(\nabla_\nu-\star\mathcal S_{n-q}\star\right)\omega=0. 
	\end{equation}
	As in Proposition \ref{liftboundcond} we compute that 
	\begin{eqnarray*}
	\left(\nabla_\nu-\star \mathcal  S_{n-q}\star\right)\omega(\nu,e_{i_1},\cdots,e_{i_ {q-1}}) & = & (\nu\wedge d^\star\omega)(\nu,e_{i_1},\cdots,e_{i_ {q-1}})\\
	& = & (\nu\righthalfcup\nu\wedge d^\star\omega)(e_{i_1},\cdots,e_{i_ {q-1}})\\
	& = & \left(\Pi_{\rm tan}d^\star\omega\right)(e_{i_1},\cdots,e_{i_ {q-1}})
	\end{eqnarray*}
	so that (\ref{relbx})
can be rewritten as 
\[
\omega_{\rm tan}=0, \quad (d^\star\omega)_{\rm tan}=0.
\]
This is exactly how relative 
boundary conditions for the Hodge Laplacian are defined \cite{T}. We thus see that for differential forms the cancellations leading to the correct boundary conditions are caused by the fact that $D_{\partial N}$ clearly intertwines the projections onto the spaces of even and odd degree forms; compare to Definition \ref{compcond}. }
\end{remark}


\end{document}